\theoremstyle{plain}
\newtheorem{theorem}{Theorem}[section]
\newtheorem{lemma}[theorem]{Lemma}     
\newtheorem{corollary}[theorem]{Corollary}
\newtheorem{proposition}[theorem]{Proposition}
\theoremstyle{definition}
\theoremstyle{remark}
\newtheorem{remark}[theorem]{Remark}
\newtheorem{example}[theorem]{Example}
\newtheorem{remarkk}{Remark}
\newtheorem{claim}[remarkk]{Claim}
\newtheorem*{claim*}{Claim}
\DeclareMathOperator{\height}{ht}
\DeclareMathOperator{\supp}{supp}
\DeclareMathOperator{\ad}{ad}
 \newcommand{\calN}{\mathcal N}
\newcommand{\calO}{\mathcal O} 
\newcommand{\calX}{\mathcal X} 
\newcommand{\calS}{\mathcal S}
 \newcommand{\mN}{\mathbb N}
 \newcommand{\mZ}{\mathbb Z}
\newcommand{\goa}{\mathfrak a}
\newcommand{\gog}{\mathfrak g}
\newcommand{\gos}{\mathfrak s}
\newcommand{\gon}{\mathfrak n}
\newcommand{\gob}{\mathfrak b}
\newcommand{\got}{\mathfrak t}
\newcommand{\grd}{\delta}
\renewcommand{\setminus}      {\smallsetminus}
\renewcommand{\epsilon}      {\varepsilon}
\renewcommand{\geq}      {\geqslant}
\renewcommand{\leq}      {\leqslant}
\newsavebox{\kdwzero}
\newsavebox{\kdwone}
\newsavebox{\kdwtwo}
\savebox{\kdwzero}{\put(-240,240){0}}
\savebox{\kdwone}{\put(-240,240){1}}
\savebox{\kdwtwo}{\put(-240,240){2}}
\title{Fully commutative elements and spherical nilpotent orbits}
\date{\today}
\author{Jacopo Gandini}
\address{Dipartimento di Matematica, Universit\`a di Bologna, Piazza di Porta San Donato 5, 40126 Bologna, Italy}
\email{jacopo.gandini@unibo.it}
\begin{document}

\begin{abstract}
Let $\gog$ be a complex simple Lie algebra, with fixed Borel subalgebra $\gob \subset \gog$ and with Weyl group $W$. Expanding on previous work of Fan and Stembridge in the simply laced case, this note aims to study the fully commutative elements of $W$, and their connections with the spherical nilpotent orbits in $\gog$. If $\gog$ is not of type $G_2$, it is shown that an element $w \in W$ is fully commutative if and only if the subalgebra of $\gob$ determined by the inversions of $w$ lies in the closure of a spherical nilpotent orbit. A similar characterization is also given for the ad-nilpotent ideals of $\gob$, which are parametrized by suitable elements in the affine Weyl group of $\gog$ thanks to the work of Cellini and Papi.
\end{abstract}

\maketitle

\section{Introduction}

Let $G$ be a complex algebraic group, simple and adjoint with Lie algebra $\gog$. Let $T \subset G$ be a maximal torus with character lattice $\calX(T)$ and Lie algebra $\got$, and let $\Phi \subset \calX(T)$ be the associated root system, with Weyl group $W = N_G(T)/T$. Let also $B \subset G$ be a Borel subgroup containing $T$, $\gob \subset \gog$ the corresponding Lie algebra, whose nilradical is denoted by $\mathfrak n$, and $\Delta \subset \Phi$ the associated base. We denote the associated set of positive (resp. negative) roots by $\Phi^+$ (resp. $\Phi^-$).  If $\alpha \in \Phi$, we denote by $s_\alpha \in W$ the corresponding reflection, and by $\gog_\alpha \subset \gog$ the corresponding root space.
If $\alpha \in \Delta$, then the reflection $s_\alpha \in W$ is called simple. If $\alpha, \beta \in \Delta$, then we denote by $m(s_\alpha, s_\beta)$ the order of the product $s_\alpha s_\beta$.

Given $w \in W$, recall that
\begin{itemize}
	\item[$\bullet$] $w$ is \textit{commutative} if no reduced expression for $w$ contains a substring of the shape $s_\alpha s_\beta s_\alpha$, where $\alpha, \beta \in \Delta$ satisfy $||\alpha|| \leq ||\beta||$.\\
	\item[$\bullet$] $w$ is \textit{fully commutative} if no reduced expression for $w$ contains a substring of the shape $s_\alpha s_\beta s_\alpha s_\beta \ldots$ of length $m(s_\alpha, s_\beta)$, 	where $s_\alpha$ and $s_\beta$ are non-commuting simple reflections.
\end{itemize}

The sets of the commutative and of the fully commutative elements in $W$ will be respectively denoted by $W_c$ and  $W_{\!f\!c}$.

Commutative and fully commutative elements were introduced around the same time respectively by Fan \cite{Fan} and by Stembridge \cite{St} in the context of Coxeter groups. It follows from the definition that commutative elements are also fully commutative. 
In the simply laced case the two notions are equivalent: indeed in this case all the roots have the same length, and $m(s_\alpha, s_\beta) \leq 3$ for all $\alpha , \beta \in \Delta$. On the other hand, in the non-simply laced case fully commutative elements need not to be commutative. Consider for example the root system of type $B_2$, with simple roots $\alpha_1, \alpha_2$ such that $||\alpha_1|| > ||\alpha_2||$: then $m(\alpha_1,\alpha_2) = 4$, therefore $s_{\alpha_1} s_{\alpha_2} s_{\alpha_1}$ is fully commutative but not commutative.

Let $G$ act on $\gog$ via the adjoint action, denoted by $(g,x) \mapsto g.x$, and recall that $x \in \gog$ is called nilpotent if $\ad(x) \in \mathrm{End}(\gog)$ is nilpotent. Following a construction of Steinberg, every Weyl group element defines a nilpotent orbit in $\gog$ (that is, the $G$-orbit of a nilpotent element in $\gog$). In the simply-laced case, a connection between commutative elements in $W$ and nilpotent orbits in $\gog$ was pointed out by Fan and Stembridge \cite{FS} in terms of such construction: the (fully) commutative elements are precisely those elements which correspond to particular nilpotent orbits, called \textit{spherical}. The main goal of this note is to extend such a connection to the general case.

Let $w \in W$, and denote by
\[\Phi(w) = \{\alpha \in \Phi^+ \; | \; w(\alpha) \in \Phi^-\}\]
the corresponding set of inversions. We can attach to $w$ a $T$-stable subspace of $\gon$ by setting
\[
	\goa_w = \bigoplus_{\alpha \in \Phi(w)} \gog_\alpha = \mathfrak{n} \; \cap \; (w^{-1}w_0 \, \mathfrak{n})
\]
where $w_0 \in W$ denotes the longest element.

As pointed out by Fan \cite[Section 7]{Fan} (see also \cite{FS} for the case of arbitrary simply laced Coxeter groups), the commutativity of $w$ is equivalent to the fact that $\goa_w$ is an abelian subalgebra of $\gon$, namely that $\Phi(w)$ satisfies the following property:
\[
	\alpha, \beta \in \Phi(w) \quad \Longrightarrow \quad \alpha + \beta \not \in \Phi(w)
\]
More generally, a characterization of the full commutativity of $w$ in terms of the inversion set $\Phi(w)$ was given by Cellini and Papi \cite{CP} in the case of arbitrary Coxeter groups (see also Theorem \ref{teo:caratterizzazione_FC_CP}, where such characterization will be recalled).

Let $\calN = G.\gon$ be the subvariety of nilpotent elements in $\gog$. Since $\goa_w$ is irreducible and since the number of $G$-orbits in $\calN$ is finite, there exists a unique $G$-orbit $\calO_w \subset \calN$ which intersects $\goa_w$ in a dense open subset. Notice that, up to multiplication by $w_0$,  the map
\[
	\varphi : W \longrightarrow \calN/G \qquad \qquad w \longmapsto \calO_w
\]
is the Steinberg map $\mathrm{St} : W \longrightarrow \calN/G$  of \cite[Section 1.8]{Jo}. Namely, we have
\[\mathrm{St}(w) = \calO_{w_0 w^{-1}} = \calO_{w w_0} = \varphi(ww_0).\] In particular, up to multiplication by $w_0$, the fibers of $\varphi$ are the two-sided Steinberg cells of $W$. As a consequence of a result of Steinberg \cite[Theorem 3.5]{Stein}, it follows that $\mathrm{St}$ is a surjective map. Therefore $\varphi$ is a surjective map as well.

Recall that a nilpotent $G$-orbit is called \textit{spherical} if it contains an open $B$-orbit. The sphericality of the orbit of an element $x \in \calN$ is nicely characterized with the notion of \textit{height}, defined as
\[
	\height(x) = \max \{n \in \mN \; | \; \ad(x)^n \neq 0\}.
\]
Indeed, by a theorem of Panyushev \cite[Theorem 3.1]{Pa1} we have the equivalence
\[
	G.x \text{ is spherical} \Longleftrightarrow \height(x) \leq 3
\]

We will prove the following characterization, which is the first main result of the paper.

\begin{theorem}	\label{teo1-intro}
Let $w \in W$.
\begin{itemize}
	\item[i)] If $\Phi$ is simply laced or of type $G_2$, then $w \in W_c$ if and only if $\calO_w$ is spherical.
	\item[ii)] If $\Phi$ is doubly laced, then $w \in W_{\!f\!c}$ if and only if $\calO_w$ is spherical.
\end{itemize}
\end{theorem}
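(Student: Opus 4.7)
The overall strategy is to translate both sides of each equivalence into a condition on the inversion set $\Phi(w)$ and then match them. By Panyushev's height criterion recalled above, $\calO_w$ is spherical if and only if $\height(x) \leq 3$ for some (equivalently, any) $x \in \calO_w$. Since $\calO_w \cap \goa_w$ is open and dense in the irreducible variety $\goa_w$, this reduces to $\height(x) \leq 3$ for a generic $x = \sum_{\alpha \in \Phi(w)} c_\alpha e_\alpha \in \goa_w$, which is a combinatorial condition on chains of root sums inside $\Phi(w)$. Dually, the (full) commutativity of $w$ is encoded by $\Phi(w)$ through the Cellini--Papi theorem (Theorem \ref{teo:caratterizzazione_FC_CP}), so the task becomes showing that these two combinatorial conditions on $\Phi(w)$ coincide.

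For part (i), in the simply laced case one has $W_c = W_{\!f\!c}$, and the equivalence $w \in W_c \Leftrightarrow \calO_w$ spherical is the theorem of Fan and Stembridge cited in the introduction, which I would simply quote. Type $G_2$ is small enough for direct inspection: the Weyl group has order $12$ and there are only five nilpotent $G$-orbits, so I would list, for each $w \in W$, the set $\Phi(w)$ and the associated orbit $\calO_w$ (using Panyushev's criterion to decide which orbits are spherical), and then read off the equivalence $w \in W_c \Leftrightarrow \calO_w$ spherical by direct comparison.

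For part (ii), let $\Phi$ be of type $B_n$, $C_n$, or $F_4$. I would argue the two implications separately. For $w \in W_{\!f\!c} \Rightarrow \calO_w$ spherical, the Cellini--Papi characterization excludes certain $4$-element configurations of roots from $\Phi(w)$; the plan is to show that their absence forces $\ad(x)^4 = 0$ for generic $x \in \goa_w$, hence $\height(x) \leq 3$. For the converse I would argue by contraposition: if $w \notin W_{\!f\!c}$, then some reduced expression of $w$ contains a braid substring $s_\alpha s_\beta s_\alpha s_\beta$ of length $m(s_\alpha,s_\beta) = 4$, and a standard Coxeter-theoretic argument shows that $\Phi(w)$ then contains (a Weyl-translate of) all four positive roots of the $B_2$-subsystem $\langle \alpha, \beta \rangle$. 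The regular nilpotent of the corresponding subalgebra of type $B_2$ then lies in $\goa_w$ and has $\ad$-nilpotency index $6$ already inside this subalgebra, and hence $\geq 4$ in $\gog$; this exhibits $x \in \goa_w$ with $\height(x) \geq 4$, so $\calO_w$ is not spherical.

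The main obstacle is the forward implication in part (ii). While the reverse direction is constructive and needs only a single forbidden configuration, the forward direction demands a uniform argument ruling out every height-$4$ chain of root sums in $\Phi(w)$ whenever $\Phi(w)$ satisfies the Cellini--Papi condition. This is most delicate in type $F_4$, where the interaction between long and short roots is richest, and I anticipate a case analysis organized according to the shape of $\Phi(w)$ inside the root poset, possibly via a reduction to rank-$2$ parabolic subsystems.
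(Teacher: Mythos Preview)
Your overall architecture matches the paper's: reduce both sides to conditions on $\Phi(w)$, quote Fan--Stembridge for the simply laced case, handle $G_2$ by direct inspection, and for the doubly laced case split into the two implications with the hard work in the forward direction. There is, however, a small gap in your contrapositive argument: in doubly laced types the simple roots can have $m(s_\alpha,s_\beta)=3$ as well as $4$, so $w\notin W_{\!f\!c}$ may be witnessed by an $A_2$ braid string rather than a $B_2$ one. The fix is immediate (the regular nilpotent of the $A_2$ subalgebra also has $(\ad x)^4\neq 0$), and indeed the paper handles both cases at once via Lemma~\ref{lemma:2nonsferico}: any $\alpha,\beta\in\Phi(w)$ with $\langle\alpha,\beta\rangle<0$ already gives a non-spherical $e_\alpha+e_\beta$.

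The more substantive difference is that the paper inserts an intermediate reformulation you do not mention, and which greatly streamlines the forward implication. Rather than working directly with the Cellini--Papi condition ``$\Phi(w)$ contains no irreducible rank-$2$ parabolic positive subsystem,'' the paper observes (Corollary~\ref{cor:caratterizzazione_FC}) that in types not $G_2$ this is equivalent to $\langle\alpha,\beta\rangle\geq 0$ for all $\alpha,\beta\in\Phi(w)$. This single scalar inequality is what drives the entire root-system analysis of Section~\ref{sec:dimostrazione}: one studies $4$-tuples $\gamma_1,\gamma_2,\gamma_3,\gamma_4\in\Phi(w)$ with $\gamma_1+\gamma_2+\gamma_3+\gamma_4=\alpha-\beta$ and shows, using only the pairwise non-negativity, that each contributes zero to $(\ad x)^4$. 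Your anticipation that $F_4$ is the delicate case is correct (see Lemma~\ref{lemma:4-nonortF4}), but the organizing principle is the sign condition on pairings, not a case analysis ``according to the shape of $\Phi(w)$ inside the root poset.''
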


When $\Phi$ is simply laced, the previous theorem is due to Fan and Stembridge \cite{FS}. It seems that after that paper the general situation was never explored. The present note wants to fill such a lack.

As an application of the previous theorem, we get the following corollary (the case of type $G_2$ is treated on its own). 

\begin{corollary}	\label{cor1b}
The subset of the fully commutative elements $W_{\!f\!c} \subset W$ is a union of $w_0$-traslates of two-sided Steinberg cells.
\end{corollary}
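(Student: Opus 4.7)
My plan is to exploit the observation recalled just before Theorem~\ref{teo1-intro}: the fibers of $\varphi$ coincide with the $w_0$-translates of the two-sided Steinberg cells. Granted this, the corollary reduces to showing that $W_{\!f\!c}$ is the $\varphi$-preimage of some subset of $\calN/G$; equivalently, that whether $w$ lies in $W_{\!f\!c}$ depends only on $\calO_w=\varphi(w)$.

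Outside type $G_2$ this will follow directly from Theorem~\ref{teo1-intro}. In the simply laced case $W_c=W_{\!f\!c}$, so part~(i) identifies $W_{\!f\!c}$ with $\varphi^{-1}(\calS)$, where $\calS\subset\calN/G$ is the set of spherical nilpotent orbits; in the doubly laced case part~(ii) gives the same equality directly. In both situations $W_{\!f\!c}$ is thus a union of fibers of $\varphi$, hence a union of $w_0$-translates of two-sided Steinberg cells.

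The only remaining case is $G_2$, which I expect to be the main obstacle since Theorem~\ref{teo1-intro}(i) there concerns $W_c$ rather than $W_{\!f\!c}$. Since $W$ is dihedral of order $12$ with $m(s_{\alpha_1},s_{\alpha_2})=6$ equal to the length of $w_0$, the unique element whose reduced expression contains a full alternating substring of length $6$ is $w_0$ itself; hence $W_{\!f\!c}=W\setminus\{w_0\}$, and I need only verify that $\{w_0\}$ is by itself a fiber of $\varphi$. Now $\varphi(w_0)=\calO_{w_0}$ is the regular nilpotent orbit, since $\goa_{w_0}=\gon$ and the regular nilpotent elements form a dense open subset of $\gon$. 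For any $w\neq w_0$ the inversion set $\Phi(w)$ must omit some simple root, because $\Phi(w)\supseteq\Delta$ would force $w\cdot\Delta\subseteq\Phi^-$ and hence (expanding an arbitrary positive root as a non-negative integral combination of simple roots) $w=w_0$. Then every element of $\goa_w$ has a vanishing coefficient on that missing simple root space, so $\goa_w$ contains no regular nilpotent element and $\varphi(w)\neq\calO_{w_0}$, as required.
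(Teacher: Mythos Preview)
Your argument is correct and follows the route the paper indicates: outside type $G_2$ you deduce directly from Theorem~\ref{teo1-intro} that $W_{\!f\!c}=\varphi^{-1}(\{\text{spherical orbits}\})$, hence a union of fibers of $\varphi$; for $G_2$ you supply the direct verification that $\varphi^{-1}(\calO_{\mathrm{reg}})=\{w_0\}$, which the paper only gestures at with the parenthetical remark. The one step you invoke without comment---that an element of $\gon$ missing a simple-root coordinate cannot be regular nilpotent---is Kostant's classical characterization of the regular nilpotent elements in $\gon$, so the argument is complete.
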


Notice that in general the subset of the commutative elements $W_{\!c} \subset W$ is not a union of $w_0$-translates of two-sided Steinberg cells. This is indeed true in simply laced type because $W_{\!c} = W_{\!f\!c}$, and is true in type $G_2$ as well, but it fails in the doubly laced case.

Given a subspace $\goa \subset \gon$, we will say that $\goa$ is a \textit{spherical subspace} if every $G$-orbit intersecting $\goa$ is spherical, namely if $\height(x) \leq 3$ for all $x \in \goa$. Notice that the spherical locus
\[\calN_3 = \{x \in \calN \; | \; \height(x) \leq 3\}
\]
is a closed subvariety of $\calN$, which is indeed irreducible in all cases (see \cite[Section 6.1]{Pa2}). This implies that, if $w \in W$, then $\goa_w$ is a spherical subspace of $\gon$ if and only if $\calO_w$ is a spherical orbit, if and only if $\overline{G.\goa_w}$ is the closure of a spherical nilpotent orbit. Therefore Theorem \ref{teo1-intro} characterizes the sphericality of $\goa_w$ in terms of the commutativity and full commutativity of $w$.

We will also consider some kind of variation of Theorem \ref{teo1-intro}, allowing to characterize the sphericality of an ad-nilpotent ideal of $\gob$ in terms of the commutativity and full commutativity of a suitable element in the affine Weyl group $\widehat W$.

Following Cellini and Papi \cite{CP0}, we will say that an ideal $\goa \subset \gob$ is \textit{$\ad$-nilpotent} if it is contained in $\gon$. Notice that in this case $G.\goa$ is automatically closed, because $\goa$ is $B$-stable and the map $G \times^B \goa \rightarrow \gog$ defined by the $G$-action is proper.

Generalizing Peterson's classification \cite{Ko} of the abelian ideals of $\gob$, the ad-nilpotent ideals of $\gob$ were classified in \cite{CP0} by attaching to every such an ideal $\goa$ a suitable element $w_{\mathfrak a} \in \widehat W$ (see Section \ref{ssec:cp}, where the construction will be recalled).
On the other hand, the ad-\textit{nilpotent spherical ideals} (that is, the spherical subspaces of $\gon$ which are also ideals in $\gob$) were classified by Panyushev and R\"ohrle \cite{PR1}, \cite{PR2}. In these papers, they show that an abelian ideal of $\gob$ is necessarily spherical (and ad-nilpotent), and that if $\Phi$ is simply laced the converse is also true. Moreover, they classify the maximal ad-nilpotent spherical ideals for all complex simple Lie algebras.

The definition of commutative and fully commutative element given in the finite case go through in the affine case. We will prove the following theorem, which is the second main result of the paper.

\begin{theorem} \label{teo2-intro}
Let $\goa \subset \mathfrak{b}$ be an ad-nilpotent ideal and let $w_\goa \in \widehat W$ be the corresponding element.
\begin{itemize}
	\item[i)] If $\Phi$ is simply laced or of type $G_2$, then $\goa$ is spherical if and only if $w_\goa$ is commutative.
	\item[ii)] If $\Phi$ is doubly laced, then $\goa$ is spherical if and only if $w_\goa$ is fully commutative.
\end{itemize}
\end{theorem}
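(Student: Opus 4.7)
The plan is to mirror the proof of Theorem \ref{teo1-intro} in the affine setting, exploiting the Cellini--Papi parametrization $\goa \mapsto w_\goa$. The key feature of that parametrization is that the set of affine inversions of $w_\goa$ has a controlled shape in which the positive finite roots whose root spaces lie in $\goa$ are recorded explicitly, so that one obtains a clean dictionary between combinatorial conditions on $w_\goa$ and combinatorial conditions on $\Phi_\goa = \{\alpha \in \Phi^+ \mid \gog_\alpha \subset \goa\}$. First I would spell out this dictionary, and then apply the affine version of the Cellini--Papi criterion (Theorem \ref{teo:caratterizzazione_FC_CP}) to restate (full) commutativity of $w_\goa$ as an explicit closure condition on its affine inversion set.

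Next, I would invoke Panyushev's height criterion: since $\goa$ is $B$-stable one has $\overline{G.\goa} = G.\goa$, and sphericality of every $G$-orbit meeting $\goa$ is equivalent to $\height(x) \leq 3$ for a generic $x \in \goa$, which in turn is equivalent to a purely combinatorial condition on $\Phi_\goa$ of the same kind used implicitly in the proof of Theorem \ref{teo1-intro}. With the dictionary in hand, Theorem \ref{teo2-intro} should reduce to checking that this sphericality condition on $\Phi_\goa$ matches the (full) commutativity condition on the inversion set of $w_\goa$. In simply-laced and $G_2$ types, this match is essentially given by the no-sum condition characterizing commutativity; in the doubly-laced types, full commutativity of $w_\goa$ allows order-$4$ alternations, which should correspond to the extra short-root sums compatible with height $\leq 3$, exactly as in the finite case.

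The main obstacle I expect lies in the doubly-laced case. Affine inversions of $w_\goa$ can involve both long and short affine roots and, crucially, shifts by the null root $\delta$, and one has to verify that every way the height-$\leq 3$ condition can fail in $\goa$ is detected by a bad order-$4$ braid subword in a reduced expression of $w_\goa$, and conversely. Unlike in the finite setting there is no longest element in $\widehat W$ through which one could relay Theorem \ref{teo1-intro} (as is done for Corollary \ref{cor1b}), so the argument must proceed directly via the Cellini--Papi inversion structure. The careful bookkeeping of the $\delta$-contributions to the affine inversion set, and their matching against the finite combinatorial conditions on $\Phi_\goa$ furnished by the Panyushev--R\"ohrle classification, is where most of the technical work will reside; I would expect the verification to be carried out type by type, using Panyushev--R\"ohrle's list of maximal ad-nilpotent spherical ideals as a guide.
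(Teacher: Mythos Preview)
Your broad strategy---translate (full) commutativity of $w_\goa$ into a condition on its affine inversion set via Theorem~\ref{teo:caratterizzazione_FC_CP}, then match this against a sphericality criterion for $\goa$---is the same as the paper's, but you misidentify where the work lies. The $\delta$-bookkeeping you flag as the main obstacle is in fact trivial: since $\delta$ lies in the radical of the bilinear form, one has $\langle m\delta-\alpha,\, n\delta-\beta\rangle=\langle\alpha,\beta\rangle$, so the nonnegativity condition on $\widehat\Psi_\goa$ is literally the nonnegativity condition on $\Psi_\goa$ (Remark~\ref{oss:Psi_a-affine}). Combined with Corollary~\ref{cor:caratterizzazione_FC}, this immediately reduces the non-$G_2$ case of Theorem~\ref{teo2-intro} to a purely \emph{finite} statement: a combinatorial ideal $\Psi\subset\Phi^+$ defines a spherical subspace if and only if $\langle\alpha,\beta\rangle\geq 0$ for all $\alpha,\beta\in\Psi$. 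That is Theorem~\ref{teo:caratterizzazione-sspazi-sferici}, and \emph{that} is where the real technical effort is spent (Section~\ref{sec:dimostrazione}), via a uniform analysis of configurations of four roots summing to a root difference (Lemma~\ref{lemma:4-ortogonali}) rather than a type-by-type check. Your proposed type-by-type route through the Panyushev--R\"ohrle classification would also succeed---the paper itself notes that the ideal case of Theorem~\ref{teo:caratterizzazione-sspazi-sferici} is implicit in \cite{PR2}---but the uniform argument has the payoff of treating biconvex sets and combinatorial ideals simultaneously, so that Theorems~\ref{teo1-intro} and~\ref{teo2-intro} both follow from the same lemma. The $G_2$ case is handled separately by a short direct check (spherical $\Leftrightarrow$ abelian $\Leftrightarrow$ $w_\goa$ commutative).
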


Again, in the simply laced case the previous theorem is well known. Indeed, by the mentioned results of Panyushev and R\"ohrle \cite{PR1} \cite{PR2}, in this case an ad-nilpotent ideal of $\gob$ is spherical if and only if it is abelian, and by Peterson's classification \cite{Ko} the abelian ideals $\goa \subset \gob$ are classified by the corresponding commutative elements $w_\goa \in \widehat W$. On the other hand, in the non-simply laced case the previous statement is missing in the literature.

We now discuss the organization of the paper. Sections \ref{sec:general} and \ref{sec:dimostrazione} deal with the case not of type $G_2$, whereas the latter is treated separately in Section \ref{sec:G2}.
More precisely, using the characterization of the fully commutative elements of Cellini and Papi \cite{CP}, in Section \ref{sec:general} we will reduce both Theorem \ref{teo1-intro} and Theorem \ref{teo2-intro} to another characterization of the sphericality of a subspace $\goa \subset \gon$, which is either defined by the inversions of a Weyl group element or is an ideal in $\gob$ (see Theorem \ref{teo:caratterizzazione-sspazi-sferici}). Section \ref{sec:dimostrazione} is devoted to the proof of Theorem \ref{teo:caratterizzazione-sspazi-sferici}.\\

\textit{Acknowledgments.} I want to thank Riccardo Biagioli, the conversations with whom raised my interest in the topic of this paper. As well, I want to thank John Stembridge for useful email exchanges, and the referee for his suggestions.

\section{Fully commutative elements and spherical nilpotent orbits} \label{sec:general}

Let $\Psi \subset \Phi^+$. Recall that $\Psi$ is called \textit{closed} if every root which is a sum of two elements in $\Psi$ is also in $\Psi$, whereas it is called \textit{convex} if every root which is a linear combination of two elements in $\Psi$ with positive coefficients is also in $\Psi$. If both $\Psi$ and $\Phi^+ \smallsetminus \Psi$ are closed (resp. convex), then $\Psi$ is called \textit{biclosed} (resp. \textit{biconvex}).

\begin{remark} \label{oss:biconvex}
If $\Psi \subset \Phi^+$, then the following are equivalent:
\begin{itemize}
	\item[i)] $\Psi = \Phi(w)$ for some $w \in W$;
	\item[ii)] $\Psi$ is biconvex;
	\item[iii)] $\Psi$ is biclosed.
\end{itemize}
The implication iii) $\Rightarrow$ i) is well known (see \cite[Exercise 16, Chap. VI, \S 1]{Bou}), whereas the other implications are clear. Notice that the equivalence i) $\Leftrightarrow$ ii) holds more generally for the root system of any finitely generated Coxeter group (see \cite[Section 8]{Pilk}), provided $\Psi$ is finite.
\end{remark}

Following Panyushev, we will say that $\Psi \subset \Phi^+$ is a \textit{combinatorial ideal} if it is closed under addition of arbitrary positive roots, namely if the following holds
\[
	\alpha\in \Psi, \, \beta \in \Phi^+ , \, \alpha + \beta \in \Phi^+ \Longrightarrow \alpha + \beta \in \Psi
\]

To any subset $\Psi \subset \Phi^+$ we attach a $T$-stable subspace of $\gon$, defined as follows
\[
	\goa_\Psi = \bigoplus_{\alpha \in \Psi} \mathfrak{g}_\alpha.
\]
Keeping the notation used in the introduction, if $\Psi = \Phi(w)$ we will also write $\goa_w$ instead of $\goa_{\Phi(w)}$.

\begin{remark}
Given $\Psi \subset \Phi^+$, it follows immediately from the definitions that
\begin{itemize}
	\item[i)] $\goa_\Psi$ is a subalgebra of $\gon$ if and only if $\Psi$ is a closed subset of $\Phi^+$, 
	\item[ii)] $\goa_\Psi$ is an ideal of $\gob$ if and only if $\Psi$ is a combinatorial ideal in $\Phi^+$.
\end{itemize}
\end{remark}

The formalism of biconvex subsets of roots and combinatorial ideals allows to treat with the same language the two problems investigated in the paper.
When $\Phi$ is not of type $G_2$, both Theorems \ref{teo1-intro} and \ref{teo2-intro} will be easy consequences of the following theorem, thanks to the characterization of Cellini and Papi \cite{CP} of the fully commutative elements of $\widehat W$ (see Theorem \ref{teo:caratterizzazione_FC_CP}). 

Denote by $(.,.)$ the $W$-invariant scalar product on $\langle \Phi \rangle_{\mathbb R}$ induced by the Killing form on $\gog$. For $\alpha, \beta \in \Phi$, set as usual $\langle \alpha,\beta\rangle = \tfrac{2(\alpha, \beta)}{(\beta, \beta)}$.

\begin{theorem}	\label{teo:caratterizzazione-sspazi-sferici}
Suppose that $\Phi$ is not of type $G_2$, and let $\Psi\subset \Phi^+$ be either a combinatorial ideal or a biconvex subset. Then $\goa_\Psi$ is a spherical subspace of $\gon$ if and only $\langle \alpha, \beta \rangle \geq 0$ for all $\alpha, \beta \in \Psi$.
\end{theorem}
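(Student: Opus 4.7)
The plan is to first translate the sphericality of $\goa_\Psi$ into a statement about the generic element. By Panyushev's theorem, $\goa_\Psi$ is spherical precisely when $\goa_\Psi \subseteq \calN_3$; since $\calN_3$ is closed in $\calN$ and $\goa_\Psi$ is irreducible, this is equivalent to the generic $x \in \goa_\Psi$ satisfying $\height(x) \leq 3$. For the ``only if'' direction I argue by contrapositive: suppose $\alpha, \beta \in \Psi$ with $\langle \alpha, \beta \rangle < 0$, so that $(\alpha, \beta) < 0$ and $\alpha + \beta \in \Phi^+$. In both of the cases allowed by the hypothesis, $\Psi$ is closed in $\Phi^+$ (trivially as a combinatorial ideal, and by definition as a biclosed set), so $\alpha + \beta \in \Psi$. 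The rank-$2$ subsystem $\Phi_0 \subseteq \Phi$ generated by $\alpha, \beta$ is of type $A_2$ or $B_2$, the case $G_2$ being excluded by hypothesis. The element $x = e_\alpha + e_\beta \in \goa_\Psi$ is then a regular nilpotent of the rank-$2$ semisimple subalgebra $\gog_0 \subseteq \gog$ associated to $\Phi_0$; its height computed inside $\gog_0$ is $4$ in type $A_2$ and $6$ in type $B_2$, so the height in $\gog$ is at least $4$ and $\goa_\Psi \not\subseteq \calN_3$.

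For the ``if'' direction, assume $\langle \alpha, \beta \rangle \geq 0$ for all $\alpha, \beta \in \Psi$. If $\Psi$ is a combinatorial ideal then $\goa_\Psi$ is an ad-nilpotent ideal of $\gob$, and the implication is furnished by the characterization of spherical ad-nilpotent ideals due to Panyushev and R\"ohrle in \cite{PR1,PR2}. The genuinely new case is when $\Psi$ is biconvex but not a combinatorial ideal, so that $\goa_\Psi$ is only $T$-stable. A natural first attempt is to enlarge $\Psi$ to a combinatorial ideal $\Psi'$ still satisfying the inequality condition and invoke the ideal case, but this already fails in type $A_3$: for $\Psi = \{\alpha_1, \alpha_3\}$ the smallest combinatorial ideal containing $\Psi$ also contains $\alpha_2 + \alpha_3$, yet $\langle \alpha_1, \alpha_2 + \alpha_3 \rangle = -1$.

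The main obstacle is therefore this biconvex case. My plan is to tackle it uniformly by constructing a suitable element $h \in \goh$ with $\alpha(h) \geq 2$ for every $\alpha \in \Psi$ and $\alpha(h) \leq 3$ for every $\alpha \in \Phi^+$: the existence of such an $h$ forces, via a weight-counting argument on $\ad(x)$, that $\ad(x)^4 = 0$ for every $x \in \goa_\Psi$ and hence $\height(x) \leq 3$. Should this uniform construction not go through, the fall-back is a case-by-case analysis over the types $A_n, B_n, C_n, D_n, F_4, E_6, E_7, E_8$, using the Cellini--Papi characterization in Theorem \ref{teo:caratterizzazione_FC_CP} to describe biconvex $\Psi$ satisfying the condition as inversion sets of fully commutative elements of $W$, and then identifying the generic orbit $\calO_w \subseteq G \cdot \goa_w$ with a known spherical orbit from Panyushev's classification.
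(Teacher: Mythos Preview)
Your ``only if'' direction is correct and matches the paper's Lemma~\ref{lemma:2nonsferico}. The genuine gap is in your ``if'' direction for the biconvex case: the grading element $h$ you propose does not exist in general. Take $\Phi$ of type $B_2$ with short simple root $\alpha_1$ and long simple root $\alpha_2$, and let $\Psi = \Phi(s_1 s_2 s_1) = \{\alpha_1,\ \alpha_1+\alpha_2,\ 2\alpha_1+\alpha_2\}$. One checks that $\langle \gamma, \gamma' \rangle \geq 0$ for all $\gamma, \gamma' \in \Psi$, so the hypothesis of the theorem holds. But $2\alpha_1 + \alpha_2 = \alpha_1 + (\alpha_1 + \alpha_2)$ is a sum of two elements of $\Psi$, so any $h$ with $\gamma(h) \geq 2$ for all $\gamma \in \Psi$ satisfies $(2\alpha_1+\alpha_2)(h) \geq 4$, violating your upper bound $\leq 3$. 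More generally, whenever $\Psi$ contains two roots whose sum is again a root (which is allowed as soon as $\Phi$ is doubly laced), your inequalities are inconsistent. Your fallback is a plan for a case-by-case verification, not a proof, and in effect amounts to redoing Panyushev's classification orbit by orbit.

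The paper's approach is quite different and treats both the biconvex and ideal cases uniformly. One expands $(\ad x)^4$ as a sum of monomials $\ad(x_{\gamma_1})\cdots\ad(x_{\gamma_4})$ and studies when a quadruple $(\gamma_1,\gamma_2,\gamma_3,\gamma_4)$ in $\Psi$ can satisfy $\gamma_1+\gamma_2+\gamma_3+\gamma_4 = \alpha - \beta$ for some $\alpha,\beta \in \Phi$. The technical core (Lemma~\ref{lemma:4-ortogonali}) shows, purely from the positivity hypothesis $\langle\gamma_i,\gamma_j\rangle \geq 0$, that either $\Gamma=\{\gamma_1,\ldots,\gamma_4\}$ is orthogonal, or $\Phi$ is doubly laced, $\alpha=-\beta$ is long, and $\Gamma$ consists of short roots (after Lemma~\ref{lemma:4-nonortF4} rules out a long root in $\Gamma$ using the biconvex/ideal hypothesis). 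In either situation one then shows directly that $\goa_\Gamma$ is spherical (Lemmas~\ref{lemma:ortogonali_sferici} and~\ref{lemma:sferici-non-ort}), the first using the classification of non-spherical orthogonal sets from \cite{CFG}, the second by an explicit conjugation argument with one-parameter root subgroups. An inclusion-exclusion on the subsets $\Gamma' \subseteq \Gamma$ then forces each grouped contribution $p_\Gamma$ to $(\ad x)^4$ to vanish. The biconvex/ideal hypothesis is used not to build a grading, but to exclude certain bad orthogonal configurations of types $D_4$, $B_3$, $B_4/F_4$ inside $\Psi$; this is where your argument is missing the key idea.
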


As for Theorems \ref{teo1-intro} and \ref{teo2-intro}, when $\Phi$ is simply laced the previous theorem is well known. The case of biconvex subsets in simply laced type was considered by Fan and Stembridge \cite{FS}. The case of the combinatorial ideals in simply laced type can also be reduced to \cite{FS}, thanks to the mentioned result of Panyushev and R\"ohrle \cite{PR1} \cite{PR2} that an ideal of $\gob$ is spherical if and only if it is abelian, and that following Peterson's classification the abelian ideals of $\gob$ can be defined in terms of the inversions of a commutative element in $\widehat W$ (see \cite{Ko}).

When $\Psi \subset \Phi^+$ is a combinatorial ideal in non-simply laced type, the previous statement seems to be missing in the literature; however it is implicit in the classification of the maximal ad-nilpotent spherical ideals of $\gob$ of Panyushev and R\"ohrle \cite{PR2}.
It is indeed an easy remark to see that, if $\goa_\Psi$ is spherical, then $\langle \alpha, \beta \rangle \geq 0$ for all $\alpha, \beta \in \Psi$ (see Lemma \ref{lemma:2nonsferico}). On the other hand, if $\gog$ is doubly-laced, in order to classify the maximal ad-nilpotent spherical ideals of $\gob$, in \cite{PR2} the authors first classify the maximal combinatorial ideals $\Psi \subset \Phi^+$ such that $\langle \alpha, \beta \rangle \geq 0$ for all $\alpha, \beta \in \Psi$, and then show that the associated ideals are the maximal ad-nilpotent spherical ideals of $\gob$.


The proof of Theorem \ref{teo:caratterizzazione-sspazi-sferici} will be given in Section \ref{sec:dimostrazione}. It is basically an extension of the proof of Fan and Stembridge \cite{FS} from the simply laced case to the non-simply laced case. The lines of the proof are essentially the same, however technical issues make the proof in the general case definitely more involved (see Lemma \ref{lemma:4-ortogonali}, which constitutes the technical core of the proof). The case of the combinatorial ideals can be treated essentially in a uniform way together with the case of the biconvex subsets, only adding a few extra lines.

In the following subsections we explain how to deduce Theorems \ref{teo1-intro} and \ref{teo2-intro} from Theorem \ref{teo:caratterizzazione-sspazi-sferici} provided $\Phi$ is not of type $G_2$ (see Theorems \ref{teo1_ABCDEF} and \ref{teo2_ABCDEF}, respectively). The case of type $G_2$ will be discussed on its own in Section \ref{sec:G2}.

\subsection{Fully commutative elements in the affine Weyl group.}

Let $\widehat\Phi=\Phi\pm \mZ \grd$ be the affine 
root system defined by $\Phi$, where $\grd$ denotes the fundamental imaginary root (see \cite{kac}).
Let $\widehat \Phi^+ \subset \widehat \Phi$ be the positive subsystem containing $\Phi^+$ and all the roots of the shape $\alpha + n \delta$ with $\alpha \in \Phi$ and $n> 0$, and let $\widehat \Delta \supset \Delta$ be the corresponding base of $\widehat \Phi$. We also set $\widehat \Phi^- = - \widehat \Phi^+$.

Let $\widehat  W$ be the  Weyl group of $\widehat \Phi$. For $\alpha \in \widehat \Delta$ denote by $s_\alpha \in \widehat W$ the corresponding simple reflection. Extend the $W$-invariant scalar product $(.,.)$ on $\langle \Phi \rangle_{\mathbb R}$ to a $\widehat W$-invariant symmetric bilinear form on $\langle \widehat \Phi \rangle_{\mathbb R}$, with kernel generated by $\delta$. If $\alpha, \beta \in \widehat \Phi \setminus \mathbb Z \delta$ are real roots, set as usual $\langle \alpha, \beta \rangle = 2\tfrac{(\alpha,\beta)}{(\beta, \beta)}$.

Given $w \in \widehat W$, the set of inversions of $w$ is defined as
\[
	\widehat \Phi(w) = \{\alpha \in \widehat \Phi^+ \; | \; w(\alpha) \in \widehat \Phi^-\}.
\]
The notion of biconvex subset in $\widehat \Phi^+$ extends verbatim to the affine context, and still characterizes the inversion sets of elements in $\widehat W$ (see Remark \ref{oss:biconvex}).

The notions of commutative and fully commutative elements also extend verbatim to $\widehat W$. We now recall how these elements are characterized in terms of their sets of inversions.

The commutative elements were characterized by Fan and Stembridge \cite{FS}, \cite{St2}.

\begin{theorem}[{\cite[Theorem 2.4]{FS}, \cite[Theorem 5.3]{St2}}] \label{teo:FS}
Let $w \in \widehat W$, then $w$ is commutative if and only if $\alpha + \beta \not \in \widehat \Phi$ for all $\alpha, \beta \in \widehat \Phi(w)$.
%
%
If moreover $\Phi$ is simply laced, then $w$ is commutative if and only if $\langle \alpha, \beta \rangle \geq 0$ for all $\alpha, \beta \in \widehat \Phi(w)$.
\end{theorem}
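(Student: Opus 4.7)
The plan is to separate the two assertions. The main equivalence (commutative iff no two inversions sum to an affine root) is the heart of the theorem; the simply-laced refinement will then follow immediately from a root-system observation.

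For the $(\Rightarrow)$ implication I argue by contrapositive. If $w$ is not commutative, some reduced expression factors length-additively as $w = u \cdot s_\gamma s_\delta s_\gamma \cdot v$ with $\gamma, \delta \in \widehat\Delta$ and $\|\gamma\| \leq \|\delta\|$. The three inversions contributed by the central segment are
\[
\alpha_1 = u(\gamma),\qquad \alpha_2 = u s_\gamma(\delta),\qquad \alpha_3 = u s_\gamma s_\delta(\gamma).
\]
The assumption $\|\gamma\| \leq \|\delta\|$ forces $\langle \gamma, \delta\rangle = -1$ (otherwise $\langle \delta, \gamma\rangle$ would fail to be an integer), whence $s_\delta(\gamma) = \gamma + \delta$ and $s_\gamma(\delta) = \delta + m\gamma$ with $m = -\langle \delta, \gamma\rangle \in \{1,2,3\}$. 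A direct computation gives $s_\gamma s_\delta(\gamma) = (m-1)\gamma + \delta$, and consequently $\alpha_1 + \alpha_3 = u(m\gamma + \delta) = u s_\gamma(\delta) = \alpha_2 \in \widehat\Phi$, producing two inversions of $w$ whose sum is an affine root.

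For the $(\Leftarrow)$ implication I would proceed by induction on $\ell(w)$. Choose $s_i$ with $ws_i < w$ and write $w = w' s_i$ length-additively. Then $w'$ is also commutative (any reduced expression for $w'$ extends to one for $w$ by appending $s_i$), and $\widehat\Phi(w) = \{\alpha_i\} \sqcup s_i\bigl(\widehat\Phi(w')\bigr)$. Suppose $\alpha, \beta \in \widehat\Phi(w)$ with $\alpha+\beta \in \widehat\Phi$; by biclosedness of $\widehat\Phi(w)$ the sum lies in $\widehat\Phi(w)$ too. If $\alpha, \beta$ both lie in $s_i(\widehat\Phi(w'))$, then $s_i^{-1}(\alpha)+s_i^{-1}(\beta) = s_i^{-1}(\alpha+\beta) \in \widehat\Phi$ contradicts the inductive hypothesis applied to $w'$. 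The case $\alpha=\beta=\alpha_i$ is impossible since $2\alpha_i \notin \widehat\Phi$. The remaining case is $\alpha = \alpha_i$ and $\beta = s_i(\gamma)$ with $\gamma \in \widehat\Phi(w')$: here the rank-two reflection subgroup generated by $s_i$ and $s_\gamma$, together with the $\alpha_i$-string through $\gamma$, is used to produce a reduced expression for $w$ containing three consecutive letters $s_a, s_b, s_c$ whose contributed inversions are exactly $\alpha, \alpha+\beta, \beta$. The resulting identity $\alpha_a + \alpha_b = s_b(\alpha_c)$, combined with the fact that two simple roots cannot differ by a nonzero multiple of another simple root, forces $a = c$ and $\langle \alpha_a, \alpha_b\rangle = -1$; integrality of $\langle\alpha_b,\alpha_a\rangle$ then supplies the required ordering $\|\alpha_a\| \leq \|\alpha_b\|$, producing a forbidden substring $s_a s_b s_a$ and the desired contradiction.

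The simply-laced refinement is immediate from the main equivalence: when all roots have the same length, for any two positive roots $\alpha \neq \beta$ one has $\alpha + \beta \in \widehat\Phi$ iff $\langle\alpha, \beta\rangle = -1$, while $\langle\alpha,\beta\rangle \in \{-1, 0, 1\}$; hence the condition ``$\alpha + \beta \notin \widehat\Phi$ for all inversions'' coincides with ``$\langle\alpha,\beta\rangle \geq 0$ for all inversions'' (the case $\alpha = \beta$ being trivial since $\langle\alpha,\alpha\rangle = 2$). The main technical obstacle lies in the final case of the induction step of the $(\Leftarrow)$ direction: promoting the abstract dihedral relation among the four roots $\alpha_i, \gamma, s_i(\gamma), \alpha_i + s_i(\gamma)$ into an explicit substring of \emph{simple} reflections in some reduced word for $w$. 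This is delicate because $\gamma$ is generally not a simple root, so the three relevant letters must first be brought into consecutive positions by commutation moves, and the identification of $a=c$ requires careful use of the Cartan integrality constraints.
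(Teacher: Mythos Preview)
The paper does not supply a proof of this statement: it is quoted as an external result, attributed to \cite[Theorem~2.4]{FS} and \cite[Theorem~5.3]{St2}, and used as input for Corollary~\ref{cor:caratterizzazione_FC}. So there is no ``paper's own proof'' to compare against.

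That said, your argument is incomplete. The $(\Rightarrow)$ direction is fine (modulo a harmless convention swap: with the paper's definition $\widehat\Phi(w)=\{\alpha>0:w(\alpha)<0\}$ the inversions coming from the factor $s_\gamma s_\delta s_\gamma$ are $v^{-1}(\gamma)$, $v^{-1}s_\gamma(\delta)$, $v^{-1}s_\gamma s_\delta(\gamma)$ rather than the $u$-translates you wrote, but the identity $\alpha_1+\alpha_3=\alpha_2$ survives verbatim). The simply-laced refinement is also correctly derived from the main equivalence.

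The gap is exactly where you flag it: the ``remaining case'' of the inductive $(\Leftarrow)$ step. You assert that from $\alpha_i,\ s_i(\gamma),\ \alpha_i+s_i(\gamma)\in\widehat\Phi(w)$ one can \emph{produce a reduced expression for $w$ with three consecutive simple reflections whose associated inversions are precisely these three roots}. That is essentially the content of the theorem, not a step towards it: nothing you have written explains how to manoeuvre those three inversions into adjacent positions using only commutation moves (which is all that is available, since you do not yet know $w$ fails to be commutative). The rank-two subgroup $\langle s_i,s_\gamma\rangle$ is not generated by simple reflections when $\gamma\notin\widehat\Delta$, so it does not directly furnish a substring of a reduced word; and the subsequent claim that the three consecutive letters $s_a,s_b,s_c$ satisfy $a=c$ with $\|\alpha_a\|\le\|\alpha_b\|$ presupposes the very configuration you are trying to build. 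In the original sources this implication is handled differently---Fan argues via the heap/commutation-class structure of reduced words, and Stembridge via his theory of minuscule elements---rather than by the bare induction you outline. As written, the induction does not close.
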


The fully commutative elements were characterized by Cellini and Papi \cite{CP}. Recall that a subsystem $\Psi \subset \widehat\Phi$ is \textit{parabolic} if $\Psi = \langle \Psi \rangle_\mathbb R \cap \widehat \Phi$, namely if $\Psi$ is obtained by intersecting $\widehat \Phi$ with a linear subspace of $\langle \widehat \Phi \rangle _{\mathbb R}$. Every parabolic subsystem is generated by a subset of a suitable base of $\widehat \Phi$, and conversely. A \textit{parabolic positive subsystem} of $\widehat \Phi$ is a positive system of a parabolic subsystem.

\begin{theorem}[{\cite[Theorem 2]{CP}}] 	\label{teo:caratterizzazione_FC_CP}
Let $w \in \widehat W$, then $w$ is fully commutative if and only if $\widehat \Phi(w)$ does not contain any irreducible parabolic positive subsystem of rank 2.
\end{theorem}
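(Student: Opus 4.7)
The plan is to establish both implications of the equivalence, using the standard toolkit of inversion sets in Coxeter groups together with the elementary theory of parabolic subsystems in a Kac-Moody root system.

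For the necessity (that a non-fully-commutative $w$ yields such a subsystem), I would start from a reduced expression of $w$ containing a braid substring $s_{i_p}s_{i_{p+1}}s_{i_p}\cdots$ of length $m = m(s_{i_p},s_{i_{p+1}})$, with $\alpha_{i_p},\alpha_{i_{p+1}} \in \widehat\Delta$. This gives a length-additive factorization $w = u\,w'_0\,v$, where $w'_0$ is the longest element of the standard rank-2 parabolic subgroup generated by $s_{i_p}$ and $s_{i_{p+1}}$, whose inversion set is the positive subsystem $\Psi_0^+$ of the standard rank-2 irreducible parabolic $\Psi_0$ generated by these two simple roots. Applying the identity
\[
\widehat\Phi(xy) = \widehat\Phi(y) \sqcup y^{-1}\widehat\Phi(x), \qquad \text{valid when } \ell(xy)=\ell(x)+\ell(y),
\]
twice would give $\widehat\Phi(w) \supseteq v^{-1}\Psi_0^+$. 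Since the $\widehat W$-action preserves the property of being a parabolic subsystem (which is defined as the intersection of $\widehat\Phi$ with a linear subspace) and since $v^{-1}\Psi_0^+ \subseteq \widehat\Phi^+$ by length-additivity, the set $v^{-1}\Psi_0^+$ is the desired irreducible rank-2 parabolic positive subsystem contained in $\widehat\Phi(w)$.

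For the sufficiency (that the existence of such a subsystem forces non-full-commutativity), I would assume $\Psi^+\subseteq \widehat\Phi(w)$ with $\Psi$ irreducible rank-2 parabolic. Since $\widehat\Phi(w)$ is finite, $\Psi$ must be of finite type, hence of type $A_2$, $B_2$, or $G_2$, and the longest element $w_\Psi$ of $W_\Psi$ satisfies $\widehat\Phi(w_\Psi) = \Psi^+$. The inclusion of inversion sets then yields a length-additive factorization $w = w''\,w_\Psi$. Next, I would invoke the fact that every finite-type irreducible rank-2 parabolic subsystem of the Kac-Moody root system $\widehat\Phi$ is $\widehat W$-conjugate to a standard one, writing $\Psi = v^{-1}\Psi_0$ with $v\in\widehat W$ and $\Psi_0$ generated by two simple roots $\alpha_{i_p},\alpha_{i_{p+1}} \in \widehat\Delta$ chosen so that $v^{-1}\Psi_0^+ = \Psi^+$. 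Unfolding $w_\Psi = v^{-1}w_{\Psi_0}v$ with careful bookkeeping of length-additive products, I would extract a reduced expression of $w$ whose middle segment is the braid word $s_{i_p}s_{i_{p+1}}s_{i_p}\cdots$ of length $m$, so that $w$ is not fully commutative.

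The hard part will be this last step: translating the abstract inclusion $\Psi^+\subseteq\widehat\Phi(w)$ into an \emph{explicit} braid substring, in simple reflections of $\widehat W$, inside some reduced expression of $w$. The simple reflections of the dihedral subgroup $W_\Psi$ lie in $\widehat W$ but are typically not among its simple reflections, so the conversion from the factorization $w = w''\,v^{-1}w_{\Psi_0}v$ to a genuine reduced expression of $w$ with a braid substring requires a careful interplay between the $\widehat W$-conjugacy of finite-type rank-2 parabolic subsystems and the length-additivity bookkeeping. This is precisely the feature that distinguishes the present argument from the simply laced Fan-Stembridge characterization, where the analogous translation is essentially tautological.
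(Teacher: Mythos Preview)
The paper does not prove this statement: it is quoted from Cellini--Papi \cite{CP} and used as a black box. There is thus no in-paper argument to compare your outline against, so I comment only on the soundness of the outline itself.

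Your necessity direction is correct and standard.

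The sufficiency direction, however, contains a genuine error that occurs \emph{before} the step you flag as ``the hard part''. You assert that the longest element $w_\Psi$ of the dihedral reflection subgroup $W_\Psi$ satisfies $\widehat\Phi(w_\Psi) = \Psi^+$, the inversion set being computed in $\widehat W$. This is false whenever $\Psi$ is not a \emph{standard} parabolic subsystem. Concretely, in type $A_3$ take the $A_2$-subsystem $\Psi$ with base $\{\alpha_1,\,\alpha_2+\alpha_3\}$, so that $\Psi^+ = \{\alpha_1,\, \alpha_2+\alpha_3,\, \alpha_1+\alpha_2+\alpha_3\}$. Then $w_\Psi = s_{\alpha_1}s_{\alpha_2+\alpha_3}s_{\alpha_1}$ is the transposition $(1\,4)$ in $S_4$, which has length $5$, and
\[
\Phi(w_\Psi) = \{\alpha_1,\;\alpha_3,\;\alpha_1+\alpha_2,\;\alpha_2+\alpha_3,\;\alpha_1+\alpha_2+\alpha_3\} \supsetneq \Psi^+.
\]
Indeed $\Psi^+$ is not biconvex here (its complement $\{\alpha_2,\alpha_3,\alpha_1+\alpha_2\}$ is not closed, since $\alpha_2+\alpha_3\in\Phi$), so $\Psi^+$ is not the inversion set of \emph{any} element of $W$. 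Consequently the inclusion $\Psi^+ \subseteq \widehat\Phi(w)$ does not produce a length-additive factorization $w = w'' w_\Psi$: taking $w\in S_4$ with $\Phi(w) = \Phi^+ \setminus \{\alpha_3\}$, one has $\Psi^+ \subset \Phi(w)$ while $\alpha_3 \in \Phi(w_\Psi)\setminus\Phi(w)$, so $\Phi(w_\Psi)\not\subseteq\Phi(w)$ and no such factorization exists.

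Thus the route through $w_\Psi$ collapses at the outset, and the difficulty you anticipated later never comes into play. A workable approach to sufficiency must instead exploit how the roots of $\Psi^+$ sit inside a reflection (convex) ordering of $\widehat\Phi(w)$ coming from a reduced expression, and extract the braid substring from that local combinatorics rather than from a global factorization through $w_\Psi$.
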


Thus we obtain the following corollary, which generalizes the last statement of Theorem \ref{teo:FS}.

\begin{corollary}	\label{cor:caratterizzazione_FC}
Suppose that $\Phi$ is not of type $G_2$ and let $w \in \widehat W$. Then $w$ is fully commutative if and only if $\langle \alpha, \beta \rangle \geq 0$ for all $\alpha, \beta \in \widehat \Phi(w)$.
\end{corollary}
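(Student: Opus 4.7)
The plan is to derive the equivalence directly from Theorem \ref{teo:caratterizzazione_FC_CP}, supported by two elementary observations. First, since $\Phi$ is not of type $G_2$, a case-by-case inspection of the extended Dynkin diagrams of $\widehat \Phi$ (types $A_n^{(1)},B_n^{(1)},C_n^{(1)},D_n^{(1)},E_n^{(1)},F_4^{(1)}$) shows that no rank-$2$ subdiagram of type $G_2$ appears. Hence every irreducible rank-$2$ parabolic subsystem of $\widehat\Phi$ is of type $A_2$ or $B_2\cong C_2$. Second, a direct computation in these two rank-$2$ root systems shows that the unique pair of positive roots $\gamma_1,\gamma_2$ satisfying $\langle\gamma_1,\gamma_2\rangle<0$ is the pair of simple roots of the fixed positive subsystem.

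For the $(\Leftarrow)$ direction, assume $\langle\alpha,\beta\rangle\geq 0$ for every $\alpha,\beta\in\widehat\Phi(w)$, and suppose for contradiction that $\widehat\Phi(w)$ contained an irreducible parabolic positive subsystem $\Psi$ of rank $2$. Its simple roots $\gamma_1,\gamma_2$ belong to $\Psi\subset\widehat\Phi(w)$ and, by the two observations above, satisfy $\langle\gamma_1,\gamma_2\rangle<0$, contradicting the hypothesis. Theorem \ref{teo:caratterizzazione_FC_CP} then gives that $w$ is fully commutative.

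For the $(\Rightarrow)$ direction, suppose $w$ is fully commutative and fix $\alpha,\beta\in\widehat\Phi(w)$. If $\alpha=\beta$ or $(\alpha,\beta)=0$, the inequality is trivial, so assume $\alpha,\beta$ are linearly independent and non-orthogonal, and consider the parabolic subsystem $\Psi_{\alpha,\beta}=\langle\alpha,\beta\rangle_{\mathbb R}\cap\widehat\Phi$. By the first observation $\Psi_{\alpha,\beta}$ is irreducible of type $A_2$ or $B_2$, and its positive subsystem $\Psi_{\alpha,\beta}^+=\Psi_{\alpha,\beta}\cap\widehat\Phi^+$ contains $\alpha$ and $\beta$. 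If $\langle\alpha,\beta\rangle<0$, the second observation forces $\{\alpha,\beta\}$ to be the simple roots of $\Psi_{\alpha,\beta}^+$, so every element of $\Psi_{\alpha,\beta}^+$ is a nonnegative integer combination of $\alpha$ and $\beta$. The biconvexity of $\widehat\Phi(w)$ (the affine version of Remark \ref{oss:biconvex} noted in the excerpt) then yields $\Psi_{\alpha,\beta}^+\subset\widehat\Phi(w)$, contradicting Theorem \ref{teo:caratterizzazione_FC_CP}. Hence $\langle\alpha,\beta\rangle\geq 0$.

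I do not anticipate any serious obstacle: the whole argument rests on the rank-$2$ inspection, which is a short calculation, and on biconvexity of inversion sets, which the paper has already invoked in the affine setting. The role of the hypothesis on $\Phi$ is precisely to exclude the only rank-$2$ root system ($G_2$) in which a pair of positive roots with negative Cartan pairing need not be the simple pair.
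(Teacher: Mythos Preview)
Your argument is correct and follows the same line as the paper's: both deduce the equivalence from Theorem~\ref{teo:caratterizzazione_FC_CP} together with the fact that, outside type $G_2$, every irreducible rank-$2$ parabolic subsystem of $\widehat\Phi$ is of type $A_2$ or $B_2$, in which two positive roots have negative pairing precisely when they form the base; you simply make the appeal to biconvexity explicit where the paper leaves it implicit. One minor point: parabolic subsystems are not in general determined by subdiagrams of the Dynkin diagram, so your ``inspection of the extended Dynkin diagrams'' is a slightly loose justification for the first observation; the cleaner reason (used by the paper) is that the affine Dynkin diagram has at most double bonds, so the root-length ratio in $\widehat\Phi$ is at most $\sqrt 2$, which rules out any $G_2$ subsystem directly.
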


\begin{proof}
If $\Phi$ is of type $A_1$ the claim is easily shown directly, so we assume that $\Phi$ has rank at least 2. Then every parabolic subsystem $\Psi \subset \widehat \Phi$ of rank 2 is of finite type. If moreover $\Phi$ is not of type $G_2$, then the Dynkin diagram of $\widehat \Phi$ contains at most double bonds: thus the parabolic subsystems of $\widehat \Phi$ of rank 2 can only be of type $A_2$ or $B_2$. On the other hand, if $\Psi$ is of type $A_2$ or $B_2$, then two non-proportional elements $\alpha, \beta \in \Psi$ form a base of $\Psi$ if and only if $\langle \alpha, \beta \rangle < 0$. Therefore the claim follows from Theorem \ref{teo:caratterizzazione_FC_CP}.
\end{proof}

\begin{remark}
The statement of Corollary \ref{cor:caratterizzazione_FC} fails when $\Phi$ is of type $G_2$. Indeed in this case the fully commutative elements in $W$ are those of length at most $5$, whereas those which satisfy the condition of Corollary \ref{cor:caratterizzazione_FC} are those with length at most $4$.
\end{remark}

Going back to the case of the finite Weyl group, we get the validity of Theorem \ref{teo1-intro} when $\Phi$ is not of type $G_2$.

\begin{theorem}	\label{teo1_ABCDEF}
Suppose that $\Phi$ is not of type $G_2$, and let $w \in W$. Then $\goa_w$ is spherical if and only if $w$ is fully commutative.
\end{theorem}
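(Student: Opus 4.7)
The plan is to deduce the result as an immediate corollary of the two preceding results in this subsection, namely Theorem \ref{teo:caratterizzazione-sspazi-sferici} and Corollary \ref{cor:caratterizzazione_FC}. Both of these characterize a distinct property of $w$ in terms of the same intermediate combinatorial condition on the inversion set $\Phi(w)$, namely the nonnegativity of all pairings $\langle \alpha, \beta \rangle$ for $\alpha, \beta \in \Phi(w)$. The argument is then just a chaining of equivalences.

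First I would invoke Remark \ref{oss:biconvex} to see that $\Phi(w)$ is a biconvex subset of $\Phi^+$, so the hypotheses of Theorem \ref{teo:caratterizzazione-sspazi-sferici} are met (the assumption that $\Phi$ is not of type $G_2$ is precisely what is needed). This yields the first equivalence: $\goa_w$ is a spherical subspace of $\gon$ if and only if $\langle \alpha, \beta \rangle \geq 0$ for all $\alpha, \beta \in \Phi(w)$.

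Next I would view $w$ as an element of the affine Weyl group $\widehat W$ via the standard inclusion $W \subset \widehat W$ arising from $\Delta \subset \widehat \Delta$. Since $w$ fixes $\delta$, it sends any affine positive root $\beta + n\delta$ with $\beta \in \Phi$ and $n > 0$ to $w(\beta) + n\delta \in \widehat \Phi^+$, so the affine inversion set $\widehat \Phi(w)$ coincides with the finite inversion set $\Phi(w)$. Likewise, the reduced expressions of $w$ in $\widehat W$ are exactly the reduced expressions of $w$ in $W$, so $w$ is fully commutative in $W$ if and only if it is fully commutative in $\widehat W$. Corollary \ref{cor:caratterizzazione_FC}, which applies because $\Phi$ is not of type $G_2$, then yields the second equivalence: $w$ is fully commutative if and only if $\langle \alpha, \beta \rangle \geq 0$ for all $\alpha, \beta \in \Phi(w)$.

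Combining the two equivalences gives the theorem. There is no genuine obstacle at this stage: the substantial work is concentrated in Theorem \ref{teo:caratterizzazione-sspazi-sferici}, whose proof is deferred to Section \ref{sec:dimostrazione}, and the present statement is a purely formal consequence of it together with Corollary \ref{cor:caratterizzazione_FC}.
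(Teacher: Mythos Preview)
Your proposal is correct and follows essentially the same approach as the paper: both arguments chain Corollary \ref{cor:caratterizzazione_FC} (applied to $w$ viewed in $\widehat W$, using $\widehat\Phi(w)=\Phi(w)$) with Theorem \ref{teo:caratterizzazione-sspazi-sferici} (applied to the biconvex set $\Phi(w)$) through the intermediate condition $\langle \alpha,\beta\rangle\geq 0$ for all $\alpha,\beta\in\Phi(w)$. Your version is slightly more explicit in justifying why the affine and finite inversion sets and notions of full commutativity agree, but the substance is identical.
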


\begin{proof}
Let $w \in W$, regarded inside $\widehat W$ via the inclusion $\Delta \subset \widehat \Delta$. Then $\widehat\Phi(w) = \Phi(w)$, thus by Corollary \ref{cor:caratterizzazione_FC} we have \[
	w \in W_{\!f\!c} \quad \Longleftrightarrow \quad \langle \alpha, \beta \rangle \geq 0 \quad \forall \alpha, \beta \in \Phi(w).
\]
On the other hand $\Phi(w) \subset \Phi^+$ is a biconvex subset, therefore the claim follows from Theorem \ref{teo:caratterizzazione-sspazi-sferici}.
\end{proof}

\subsection{Fully commutative elements in $\widehat W$ and spherical ideals of $\gob$} \label{ssec:cp}

In order to explain the case of the spherical ideals, we first need to recall how to attach to any ad-nilpotent ideal of $\gob$ an element in $\widehat W$. The construction is due to Cellini and Papi \cite{CP}, and is inspired by Peterson's classification of the abelian ideals of $\gob$ (see \cite{Ko}).


Given an ad-nilpotent ideal $\goa \subset \gob$ we denote by $\Psi_\goa \subset \Phi^+$ the corresponding combinatorial ideal, namely the set of positive roots defined by the equality $\goa = \goa_{\Psi_\goa}$.

Set $\Psi^{(1)}_\goa = \Psi_\goa$. For all integer $k > 1$, define inductively 
\[\Psi^{(k)}_\goa = (\Psi^{(k-1)}_\goa + \Psi_\goa) \cap \Phi^+.\]
Since $\goa$ is an ideal of $\gob$, we have $\Psi^{(k)}_\goa \subset \Psi_\goa$ for all $k >0$. Define now a set of positive roots $\widehat \Psi_\goa \subset \widehat \Phi^+$, setting
\[
	\widehat \Psi_\goa = \bigcup_{k \in \mathbb N} \{ k\delta - \alpha \; | \; \alpha \in \Psi^{(k)}_\goa \}.
\]

This is indeed a finite biconvex set of positive roots, which yields the following theorem.

\begin{theorem}[{\cite[Theorem 2.6]{CP}}]
If $\goa\subset \gob$ is an ad-nilpotent ideal, then there exists $w_\goa \in \widehat W$ such that $\widehat \Psi_\goa = \widehat \Phi(w_\goa)$.
\end{theorem}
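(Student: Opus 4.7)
The plan is to show that $\widehat\Psi_\goa$ is a \emph{finite biclosed} subset of $\widehat\Phi^+$, and then to invoke the affine extension of Remark \ref{oss:biconvex}, whose implication iii) $\Rightarrow$ i) remains valid for any finite biclosed subset in the root system of a finitely generated Coxeter group, in order to produce an element $w_\goa \in \widehat W$ with $\widehat\Phi(w_\goa) = \widehat\Psi_\goa$. Finiteness of $\widehat\Psi_\goa$ is immediate from the inductive definition: every $\alpha \in \Psi^{(k)}_\goa$ is a sum of $k$ positive roots, hence $\height(\alpha)\geq k$, and since $\Phi^+$ has bounded height, $\Psi^{(k)}_\goa = \vuoto$ for all $k$ large enough.

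For the closedness of $\widehat\Psi_\goa$, I would first establish by induction on $k_2$ (using the defining recursion for $\Psi^{(k)}_\goa$) the semigroup-type inclusion $(\Psi^{(k_1)}_\goa + \Psi^{(k_2)}_\goa) \cap \Phi^+ \subseteq \Psi^{(k_1+k_2)}_\goa$. Then, given $\beta_i = k_i\delta - \alpha_i \in \widehat\Psi_\goa$ with $\alpha_i \in \Psi^{(k_i)}_\goa$ and $\beta_1+\beta_2 \in \widehat\Phi$, the sum has positive $\delta$-coefficient and is therefore not imaginary, which forces $\alpha_1+\alpha_2 \in \Phi^+$; the inclusion above then gives $\alpha_1+\alpha_2 \in \Psi^{(k_1+k_2)}_\goa$, whence $\beta_1+\beta_2 \in \widehat\Psi_\goa$.

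Coclosedness of $\widehat\Psi_\goa$ is the delicate step. Given $\beta_1, \beta_2 \in \widehat\Phi^+ \setminus \widehat\Psi_\goa$ with $\beta_1+\beta_2 = k\delta - \alpha \in \widehat\Psi_\goa$, write $\beta_i = m_i\delta + \gamma_i$ with $\gamma_i \in \Phi$ and $m_i \geq 0$, so that $m_1+m_2 = k$ and $\gamma_1+\gamma_2 = -\alpha$. A case analysis on the signs of $\gamma_1, \gamma_2$ reduces matters to the situation in which some $\gamma_i = -\alpha_i$ with $\alpha_i \in \Phi^+$; one must then show that a decomposition $\alpha = \mu_1 + \cdots + \mu_k$ with $\mu_j \in \Psi_\goa$ and all partial sums positive, which witnesses $\alpha \in \Psi^{(k)}_\goa$, can be restricted to an $m_i$-term subdecomposition witnessing $\alpha_i \in \Psi^{(m_i)}_\goa$, forcing $\beta_i \in \widehat\Psi_\goa$ and yielding the desired contradiction. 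The combinatorial ideal property of $\Psi_\goa$---that any element of $\Psi_\goa$ added to a positive root, whenever the sum stays in $\Phi^+$, remains in $\Psi_\goa$---is exactly what allows one to repair partial sums that would otherwise escape $\Psi_\goa$, and thus to perform the required extraction.

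I expect this last step to be the main obstacle: compatibly splitting a global $\Psi^{(k)}_\goa$-witness of $\alpha = \alpha_1 + \alpha_2$ into $\Psi^{(m_1)}_\goa$- and $\Psi^{(m_2)}_\goa$-witnesses of the summands is a delicate combinatorial task, requiring careful bookkeeping across all sign cases. An alternative route, modelled on Peterson's treatment of abelian ideals, would be to construct $w_\goa$ directly from $\goa$ via a suitably chosen dominant point in the coroot lattice and then verify that its inversion set coincides with $\widehat\Psi_\goa$; this avoids the biclosedness verification in principle, but analogous technicalities reappear when matching the two descriptions.
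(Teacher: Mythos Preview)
The paper does not prove this theorem: it is quoted from Cellini--Papi, and the only indication the paper gives is the sentence immediately preceding the statement, ``This is indeed a finite biconvex set of positive roots, which yields the following theorem.'' Your proposal---show that $\widehat\Psi_\goa$ is a finite biclosed (equivalently, biconvex) subset of $\widehat\Phi^+$ and then invoke the affine version of Remark~\ref{oss:biconvex}---is exactly this approach, fleshed out. So there is essentially nothing to compare against in the paper itself; your plan matches the paper's one-line summary, and you have correctly located the real work (coclosedness) where Cellini--Papi's original argument also concentrates its effort.
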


If $\goa$ is abelian, then by definition we have 
$\Psi_\goa^{(k)} = \emptyset$ for all $k >1$.
If instead $\goa$ is spherical, then we have
$\Psi_\goa^{(k)} = \emptyset$ for all $k >2$ (see \cite[Proposition 4.1]{PR2}).

\begin{remark}	\label{oss:Psi_a-affine} 
Notice that $\goa$ is abelian if and only if $w_\goa$ is commutative. Since $\langle m \delta - \alpha, n \delta - \beta \rangle = \langle \alpha, \beta \rangle$, it holds moreover
\[
\langle \alpha, \beta \rangle \geq 0 \quad \forall \alpha, \beta \in \Psi_\goa
\quad \Longleftrightarrow \quad
\langle \alpha, \beta \rangle \geq 0 \quad \forall \alpha, \beta \in \widehat \Psi_\goa.
\]
\end{remark}

We can now deduce the validity of Theorem \ref{teo2-intro} when $\Phi$ is not of type $G_2$.

\begin{theorem} \label{teo2_ABCDEF}
Suppose that $\Phi$ is not of type $G_2$, and let $\goa \subset \gob$ be an ad-nilpotent ideal. Then $\goa$ is spherical if and only if $w_\goa$ is fully commutative.
\end{theorem}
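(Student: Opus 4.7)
The plan is to chain together three equivalences already established in the excerpt, with no additional technical work needed. The key observation is that $\goa = \goa_{\Psi_\goa}$ where $\Psi_\goa \subset \Phi^+$ is a combinatorial ideal, and that the affine inversion set $\widehat \Phi(w_\goa) = \widehat \Psi_\goa$ is built out of $\Psi_\goa$ by shifting its $k$-th iterated sums by $k\delta$.

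First, I would apply Theorem \ref{teo:caratterizzazione-sspazi-sferici} to the combinatorial ideal $\Psi_\goa$. Since $\Phi$ is not of type $G_2$ by hypothesis, this gives the equivalence
\[
\goa \text{ is spherical} \quad \Longleftrightarrow \quad \langle \alpha, \beta \rangle \geq 0 \text{ for all } \alpha, \beta \in \Psi_\goa.
\]
Next, since the bilinear form $(.,.)$ on $\langle \widehat\Phi\rangle_{\mathbb R}$ has $\delta$ in its kernel, one has $\langle m\delta - \alpha, n\delta - \beta\rangle = \langle \alpha, \beta\rangle$, as already noted in Remark \ref{oss:Psi_a-affine}. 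Because $\widehat \Psi_\goa$ consists exactly of elements of the form $k\delta - \alpha$ with $\alpha \in \Psi_\goa^{(k)} \subset \Psi_\goa$, the condition $\langle \alpha, \beta\rangle \geq 0$ on pairs in $\Psi_\goa$ transfers verbatim to the same condition on pairs in $\widehat \Psi_\goa$.

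Finally, since $\widehat \Psi_\goa = \widehat \Phi(w_\goa)$ by the Cellini–Papi theorem, I would invoke Corollary \ref{cor:caratterizzazione_FC} (which requires $\Phi$ not of type $G_2$, matching our hypothesis) to conclude that
\[
\langle \alpha, \beta\rangle \geq 0 \text{ for all } \alpha, \beta \in \widehat \Phi(w_\goa) \quad \Longleftrightarrow \quad w_\goa \text{ is fully commutative}.
\]
Combining the three equivalences yields the theorem. There is no real obstacle here: all the technical content has been packaged into Theorem \ref{teo:caratterizzazione-sspazi-sferici} (whose proof is deferred to Section \ref{sec:dimostrazione}) and into the characterization of Cellini and Papi; the present argument is purely a chain of iff's, completely parallel to the proof of Theorem \ref{teo1_ABCDEF}, with the biconvex inversion set $\Phi(w)$ replaced by the combinatorial ideal $\Psi_\goa$ and its affine lift $\widehat \Psi_\goa$.
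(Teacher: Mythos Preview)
Your proposal is correct and follows essentially the same approach as the paper's own proof: both chain together Theorem~\ref{teo:caratterizzazione-sspazi-sferici} applied to the combinatorial ideal $\Psi_\goa$, Remark~\ref{oss:Psi_a-affine} to pass between $\Psi_\goa$ and $\widehat\Psi_\goa = \widehat\Phi(w_\goa)$, and Corollary~\ref{cor:caratterizzazione_FC} to characterize full commutativity. Your write-up is in fact slightly more explicit than the paper's, spelling out why the $\delta$-shifts do not affect the pairings.
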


\begin{proof}
By construction we have $\widehat \Phi(w_\goa) = \widehat \Psi_\goa$. Thus by Corollary \ref{cor:caratterizzazione_FC} together with Remark \ref{oss:Psi_a-affine} we get
\[
	w_\goa \in \widehat W_{\!f\!c}  \quad \Longleftrightarrow
	\quad \langle \alpha, \beta \rangle \geq 0 \quad \forall \alpha, \beta \in \Psi_\goa.
\]
On the other hand $\Psi_\goa \subset \Phi^+$ is a combinatorial ideal, therefore the claim follows from Theorem \ref{teo:caratterizzazione-sspazi-sferici}.
\end{proof}

\section{Proof of Theorem \ref{teo:caratterizzazione-sspazi-sferici}} \label{sec:dimostrazione}

Before going into the proof of Theorem \ref{teo:caratterizzazione-sspazi-sferici} we fix some further notation.  We denote by $\theta$ the highest root in $\Phi$. We also denote by $\theta_s$ the highest short root if $\Phi$ is non-simply laced, and we set $\theta_s  =\theta$ when $\Phi$ is simply laced. For all $\alpha \in \Phi$, we fix a non-zero element $e_\alpha \in \gog_\alpha$, and we denote by $U_\alpha \subset G$ the root subgroup with Lie algebra $\gog_\alpha$. In particular $U_\alpha$ is the image of a one parameter subgroup $u_\alpha : \mathbb C \rightarrow G$, whose action on $\gog$ is obtained by exponentiating the adjoint action of $e_\alpha$:
\[
	u_\alpha(\xi).x = x + \sum_{k >0} \frac{\xi^k}{k!} \,  \ad(e_\alpha)^k(x).
\]
Finally, if $x= \sum_{\alpha \in \Phi^+} c_\alpha e_\alpha$ is an element in $\gon$, we define the \textit{support} of $x$ as
\[\supp(x) = \{\alpha \in \Phi^+ \; | \; c_\alpha \neq 0\}.\]

From now on, we will assume throughout this section that $\Phi$ is an irreducible root system not of type $G_2$. The necessity of the condition of Theorem \ref{teo:caratterizzazione-sspazi-sferici} is well known (see e.g. \cite{FS}, \cite{PR2}). We recall a proof in the following lemma.

\begin{lemma}	\label{lemma:2nonsferico}
Let $\alpha, \beta\in \Phi^+$ be such that $\langle \alpha, \beta \rangle <0$. Then the $G$-orbit of $e_\alpha + e_\beta$ is not spherical.
\end{lemma}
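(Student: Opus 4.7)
The plan is to invoke Panyushev's theorem (already recalled in the introduction): it suffices to show $\height(x) \geq 4$ for $x := e_\alpha + e_\beta$, so that $G.x$ fails the height-$\leq 3$ criterion for sphericality.

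Since $\langle \alpha, \beta \rangle < 0$ we have $(\alpha, \beta) < 0$, so $\alpha$ and $\beta$ are non-proportional and $\alpha + \beta \in \Phi^+$. More strongly, $\Phi' := (\mathbb Q\alpha + \mathbb Q \beta) \cap \Phi$ is an irreducible rank-$2$ parabolic subsystem of $\Phi$ with base $\{\alpha, \beta\}$. Since $\Phi$ is not of type $G_2$, neither is $\Phi'$, so $\Phi'$ is necessarily of type $A_2$ or $B_2$.

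Let $\gos \subset \gog$ be the simple subalgebra whose root system is $\Phi'$, namely $\gos = \goh' \oplus \bigoplus_{\gamma \in \Phi'} \gog_\gamma$ with $\goh'$ spanned by the coroots of $\Phi'$. By construction $x$ is the sum of simple root vectors of $\gos$, hence a regular (principal) nilpotent element of $\gos$. The height of a principal nilpotent in a simple Lie algebra is $2\,\height(\theta_{\gos})$: indeed, completing $x$ to a principal $\mathfrak{sl}_2$-triple $(x,h,y)$, the $\ad(h)$-eigenvalues on $\gos$ are the even integers $\{2\,\height(\gamma) : \gamma \in \Phi'\} \cup \{0\}$, which exhaust the range $[-2\,\height(\theta_{\gos}), 2\,\height(\theta_{\gos})]$ without gaps, so iterated $\ad(x)$ raises this chain from bottom to top in exactly $2\,\height(\theta_{\gos})$ steps. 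In type $A_2$ this gives height $4$; in type $B_2$ it gives $6$.

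Since $\gos \hookrightarrow \gog$ as a Lie subalgebra, $\height_{\gog}(x) \geq \height_{\gos}(x) \geq 4 > 3$, so $G.x$ is not spherical. The step I expect to be the main point to verify cleanly is the height computation inside $\gos$; this can alternatively be obtained by a direct hands-on calculation, iterating $\ad(x)$ four times on $e_{-\alpha-\beta}$ and tracking the chain $e_{-\alpha-\beta} \to \Span(e_{-\alpha},e_{-\beta}) \to \goh' \to \Span(e_\alpha,e_\beta) \to \gog_{\alpha+\beta}$, with each transition nonzero since the relevant bracket structure constants of the rank-$2$ simple algebra $\gos$ are nonzero.
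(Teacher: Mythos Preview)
Your proof is correct and follows essentially the same line as the paper's: both pass to the rank-$2$ simple subalgebra $\gos$ determined by $\alpha,\beta$, observe that $x=e_\alpha+e_\beta$ is a principal nilpotent there, and conclude $(\ad x)^4\neq 0$ via Panyushev's criterion; you simply make explicit the height computation that the paper summarizes as ``a direct computation''. One small point of presentation: the claim that $\{\alpha,\beta\}$ is a base of the parabolic subsystem $\Phi'$ is only guaranteed once you know $\Phi'$ is not of type $G_2$ (in $G_2$ there exist pairs of positive roots with negative pairing that are not simple), so it would be cleaner to first exclude $G_2$ for $\Phi'$ and then deduce the base property in types $A_2$/$B_2$.
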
 

\begin{proof}
Denote $x = e_\alpha + e_\beta$ and let $\gos \subset \gog$ be the subalgebra generated by the root spaces $\gog_{\pm \alpha}$, $\gog_{\pm \beta}$. Then $\gos$ is a simple Lie algebra of type $A_2$ or $C_2$, and $x$ is a principal nilpotent element in $\gos$. A direct computation shows that $(\ad x_{|\mathfrak{s}})^4 \neq 0$. Thus $(\ad x)^4 \neq 0$, and $G. x$ cannot be spherical by Panyushev's characterization.
\end{proof}

\begin{corollary}
Let $\Psi \subset \Phi^+$ and suppose that $\goa_\Psi$ is a spherical subspace. Then $\langle \alpha, \beta \rangle \geq 0$ for all $\alpha, \beta \in \Psi$.
\end{corollary}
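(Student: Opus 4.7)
The plan is to derive this immediately from the preceding Lemma \ref{lemma:2nonsferico} by contraposition. Suppose, toward a contradiction, that there exist $\alpha, \beta \in \Psi$ with $\langle \alpha, \beta \rangle < 0$. Since $\gog_\alpha, \gog_\beta \subset \goa_\Psi$ by definition of $\goa_\Psi$, the element $x = e_\alpha + e_\beta$ lies in $\goa_\Psi$. By Lemma \ref{lemma:2nonsferico} the orbit $G.x$ is not spherical, contradicting the assumption that every $G$-orbit meeting $\goa_\Psi$ is spherical.

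There is essentially no obstacle: the corollary is just the global form of the pointwise statement in the lemma, and no further input is needed beyond unwinding the definition of \emph{spherical subspace} (every $G$-orbit intersecting $\goa_\Psi$ is spherical, equivalently $\height(y) \leq 3$ for every $y \in \goa_\Psi$). In particular I do not need to invoke anything about $\Psi$ being a combinatorial ideal or a biconvex subset; the statement holds for an arbitrary subset of $\Phi^+$, which is consistent with how it will be used later. So a one-line proof suffices.
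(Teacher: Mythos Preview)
Your proof is correct and matches the paper's approach: the corollary is stated without proof immediately after Lemma~\ref{lemma:2nonsferico}, as an obvious contrapositive consequence, exactly as you derive it. Your observation that no hypothesis on $\Psi$ (biconvex or combinatorial ideal) is needed here is also accurate.
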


We now consider the other implication of Theorem \ref{teo:caratterizzazione-sspazi-sferici}.

If $x \in \goa_\Psi$ for some $\Psi \subset \Phi^+$, then $(\ad x)^4$ breaks into a sum of monomials of the shape
\[
\ad (x_{\gamma_1}) \ad (x_{\gamma_2}) \ad (x_{\gamma_3}) \ad (x_{\gamma_4})
\]
with $\gamma_i\in \Psi$ and $x_{\gamma_i} \in \gog_{\gamma_i}$ for every $i=1,2,3,4$. In particular, if one of such monomials is non-zero, there must exist $\gamma_1, \gamma_2, \gamma_3, \gamma_4 \in \Psi$ and $\alpha, \beta \in \Phi \cup\{0\}$ such that
\[\alpha = \beta + \gamma_1 + \gamma_2 +\gamma_3 + \gamma_4.\]
This leads us to consider similar configurations of roots.

\begin{remark} \label{oss3}
Let $\gamma_1, \gamma_2, \gamma_3, \gamma_4 \in \Phi^+$ be (not necessarily distinct) such that $\langle \gamma_i, \gamma_j \rangle \geq 0$ for all $i,j$, and let $\alpha, \beta \in \Phi \cup\{0\}$ be such that
\[
	\gamma_1 + \gamma_2 + \gamma_3 + \gamma_4 = \alpha - \beta.
\]
Notice that $\alpha \in \Phi \setminus \{\gamma_1, \gamma_2, \gamma_3, \gamma_4\}$. If indeed $\alpha = 0$, then we get a contradiction because
\[||\beta||^2 = \sum_{i=1}^4 ||\gamma_i||^2 + 2 \sum_{i < j} (\gamma_i, \gamma_j) \geq \sum_{i=1}^4 ||\gamma_i||^2 \geq 4 ||\theta_s||^2.\]
Similarly, if $\alpha \in \{\gamma_1, \gamma_2, \gamma_3, \gamma_4\}$, then $||\beta||^2 \geq 3 ||\theta_s||^2$, which is absurd because $\Phi$ is not of type $G_2$. Analogously, we also have $-\beta \in \Phi \setminus \{\gamma_1, \gamma_2, \gamma_3, \gamma_4\}$.

Notice also that $\{\gamma_1, \gamma_2, \gamma_3, \gamma_4\}$ has at least cardinality 2. Indeed every root string in $\Phi$ has length less than 4.
\end{remark}

We say that a subset $\Gamma \subset \Phi$ is \textit{orthogonal} if $\langle \gamma, \gamma' \rangle = 0$ for all $\gamma, \gamma' \in \Gamma$ with $\gamma \neq \gamma'$. The following lemma is the technical core of the proof.

\begin{lemma}	\label{lemma:4-ortogonali}
Suppose that $\Phi$ is not of type $G_2$. Let $\gamma_1, \gamma_2, \gamma_3, \gamma_4 \in \Phi^+$ be (not necessarily distinct) such that $\langle \gamma_i, \gamma_j \rangle \geq 0$ for all $i,j$, and denote $\Gamma = \{\gamma_1, \gamma_2, \gamma_3, \gamma_4 \}$. Suppose that $\Gamma$ is not orthogonal, and that
\begin{equation}	\label{eq:lemma}
	\gamma_1 + \gamma_2 + \gamma_3 + \gamma_4 = \alpha - \beta
\end{equation}
for some $\alpha, \beta \in \Phi$. Then $\Phi$ is doubly laced, and $\alpha = - \beta$ is a long root. Moreover there is at most one index $i_0$ such that $\gamma_{i_0}$ is a long root, and $\langle \gamma_{i_0}, \gamma_j \rangle = 0$ for all $j \neq i_0$.
\end{lemma}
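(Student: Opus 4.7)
The plan is to take squared norms of both sides of \eqref{eq:lemma} and to combine the resulting inequality with the integrality of the Cartan integers $\langle \gamma_j, \alpha\rangle$ and $\langle \gamma_j, \beta\rangle$.

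First, I would observe that
\[
||\gamma_1 + \gamma_2 + \gamma_3 + \gamma_4||^2 = \sum_{i=1}^4 ||\gamma_i||^2 + 2\sum_{i<j}(\gamma_i, \gamma_j) \geq 4||\theta_s||^2,
\]
since every cross term is nonnegative and $||\gamma_i||^2 \geq ||\theta_s||^2$, while $||\alpha-\beta||^2 \leq (||\alpha||+||\beta||)^2 \leq 4||\theta||^2$, with equality in the latter only when $\alpha=-\beta$ is a long root. In the simply laced case the two bounds coincide, forcing every cross term $(\gamma_i,\gamma_j)$ with $i\neq j$ to vanish; in particular no coincidence $\gamma_i = \gamma_j$ can occur (such a repetition would contribute $||\gamma_i||^2 > 0$), so the $\gamma_i$ are pairwise distinct and pairwise orthogonal, contradicting the assumption that $\Gamma$ is not orthogonal. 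Thus $\Phi$ must be doubly laced.

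Writing $S = ||\theta_s||^2$, so that $||\theta||^2 = 2S$, and letting $k$ be the number of indices $i$ with $\gamma_i$ long, the inequality becomes $(k+4)S + 2\sum_{i<j}(\gamma_i,\gamma_j) \leq 8S$. Using that the positive values of $(\gamma_i,\gamma_j)$ between distinct roots are $S/2$ when both are short and $S$ when at least one is long, and that any repetition $\gamma_i = \gamma_j$ contributes $||\gamma_i||^2 \in \{S, 2S\}$, a short case analysis rules out $k\geq 2$: either the cross sum is forced to vanish, giving $\Gamma$ orthogonal, or in the residual configurations with $k=2$ the inner product of $\gamma_1+\gamma_2+\gamma_3+\gamma_4 = 2\alpha$ with a short $\gamma_j$ orthogonal to the rest yields $(\alpha,\gamma_j) = S/2$, violating the integrality $\langle \gamma_j,\alpha\rangle = (\alpha,\gamma_j)/S \in \mathbb Z$ which holds when $\alpha$ is long.

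For $k \in \{0,1\}$ the compatible values of $||\gamma_1+\gamma_2+\gamma_3+\gamma_4||^2$ are further constrained by the fact that the possible squared norms of differences of roots omit $7S$. A similar case analysis---using the integrality $(\alpha,\gamma_j) \in (||\alpha||^2/2)\mathbb Z$ and $(\beta,\gamma_j) \in (||\beta||^2/2)\mathbb Z$, and expanding the identity $(\gamma_1+\gamma_2+\gamma_3+\gamma_4,\gamma_j) = (\alpha-\beta,\gamma_j)$---rules out every configuration in which a long $\gamma_{i_0}$ is non-orthogonal to some other $\gamma_j$, as well as the residual configurations yielding $||\gamma_1+\gamma_2+\gamma_3+\gamma_4||^2 \in \{5S,6S\}$. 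The surviving configurations all have $||\gamma_1+\gamma_2+\gamma_3+\gamma_4||^2 = 8S$, which forces $\alpha = -\beta$ long and the long $\gamma_{i_0}$ (if any) orthogonal to the remaining $\gamma_j$'s, as required.

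The main obstacle is the delicate bookkeeping needed in the doubly laced setting: for each $k$ one must enumerate the repetition patterns among the $\gamma_i$ and the distributions of non-orthogonal pairs, and verify for every surviving configuration that the crystallographic obstruction $\langle \gamma_j,\alpha\rangle,\langle \gamma_j,\beta\rangle \in \mathbb Z$ either rules it out or already matches the conclusion of the lemma.
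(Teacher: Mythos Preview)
Your overall strategy—expanding $\|\gamma_1+\gamma_2+\gamma_3+\gamma_4\|^2=\|\alpha-\beta\|^2$ and feeding in the integrality of $\langle\gamma_j,\alpha\rangle$, $\langle\gamma_j,\beta\rangle$—is exactly the engine behind the paper's proof (its equation for $\|\beta\|^2/\|\theta_s\|^2$ is just your norm identity rearranged). Your treatment of the simply laced case is complete and matches the paper. However, the doubly laced part of your write-up is a sketch with real gaps, not merely cosmetic ones.

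First, your dichotomy for $k=2$ is incomplete. When the two long roots are orthogonal to each other and to the shorts, while the two shorts satisfy $(\gamma_3,\gamma_4)=S/2$, the cross sum is $S/2\neq 0$ yet \emph{no} short $\gamma_j$ is orthogonal to all the others, so your ``$(\alpha,\gamma_j)=S/2$'' argument does not apply; moreover $\|\alpha-\beta\|^2=7S$ here, so $\alpha\neq-\beta$ and you cannot write $\gamma_1+\cdots+\gamma_4=2\alpha$. This case is killed by your later ``$7S$ is omitted'' observation, but you have filed that under $k\in\{0,1\}$. Similar slippage occurs for $k=1$: if the long root $\gamma_1$ is non-orthogonal to a short $\gamma_2$ and, say, $(\gamma_3,\gamma_4)=S/2$ with everything else orthogonal, then again no short $\gamma_j$ is orthogonal to the rest; the contradiction comes instead from $(\alpha,\gamma_1)=3S/2\notin S\mathbb Z$, pairing against the \emph{long} root. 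Second, several subcases (for instance $k=0$ with $\|\alpha-\beta\|^2=5S$) do not fall to integrality alone: one must also invoke that $\alpha,-\beta\notin\Gamma$ (which bounds $(\alpha,\gamma_j)$ and $(\beta,\gamma_j)$ away from the extremal values) and that $(\alpha,\gamma_j)\geq 0\geq(\beta,\gamma_j)$ for every $j$. You never state these facts, and without them the bookkeeping does not close.

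The paper avoids this proliferation of configurations by proving the structural claims in a fixed order: first $\langle\alpha,\gamma\rangle\geq 0\geq\langle\beta,\gamma\rangle$ and $\alpha,-\beta\notin\Gamma$; then $\alpha,\beta$ are both long; then any long $\gamma_i$ is automatically orthogonal to the others; then at most one $\gamma_i$ is long; and finally $\sum_i\langle\gamma_i,\alpha\rangle=4$, forcing $\alpha=-\beta$. Each step uses the previous ones and the same norm identity you wrote down, so no enumeration of repetition patterns is needed. Your plan can be completed, but to do so cleanly you will end up reproving most of these intermediate claims anyway.
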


\begin{proof}
We split the proof into several remarks and claims.

Notice that everything we will prove for $\alpha$ will be true for $-\beta$ as well, hence for $\beta$ with the obvious modifications. Indeed $\alpha-\beta = (-\beta) - (-\alpha)$, thus it is enough to replace $\alpha$ with $-\beta$, and $\beta$ with $-\alpha$.

\begin{remarkk}		\label{oss1}
\textit{For all $\gamma \in \Gamma$, it holds $\langle \alpha, \gamma \rangle \geq 0$ and $\langle \beta, \gamma \rangle \leq 0$.}
\end{remarkk}

We only consider the first statement, the other one is similar. By \eqref{eq:lemma} we have
\[\langle \alpha, \gamma \rangle \geq \langle \beta, \gamma \rangle + \langle \gamma, \gamma \rangle = \langle \beta, \gamma \rangle + 2.\] 
Thus we conclude because $\Phi$ is not of type $G_2$.
\hfill $\bigtriangleup$

\begin{remarkk} 	\label{oss2}
\textit{It holds $\sum_{i=1}^4 \langle \gamma_i, \alpha \rangle \leq 4$, with strict inequality if $\alpha$ is long and $\alpha \neq - \beta$}.
\end{remarkk}

By \eqref{eq:lemma} we have 
\[\sum_{i=1}^4 \langle \gamma_i, \alpha \rangle = \langle \alpha, \alpha \rangle - \langle \beta , \alpha \rangle \leq 2 - \langle \beta , \alpha \rangle.\]
Thus we conclude because $\Phi$ is not of type $G_2$.
\hfill $\bigtriangleup$

\begin{claim} \label{claim1}
\textit{$\Phi$ is doubly laced.}
\end{claim}

By \eqref{eq:lemma} we have
\begin{equation}	\label{eq:norma_rho}
	\frac{||\beta||^2}{||\theta_s||^2} = 
	\frac{||\alpha||^2}{||\theta_s||^2} 
	+ \sum_{i< j} \frac{||\gamma_j||^2}{||\theta_s||^2} \langle \gamma_i, \gamma_j \rangle  + \sum_{i=1}^4
 \frac{||\gamma_i||^2}{||\theta_s||^2} (1 - \langle \alpha, \gamma_i \rangle) 
\end{equation} 
Then the middle term on the right hand side is positive by the assumption on $\Gamma$. On the other hand $\alpha \not \in \Gamma$ by Remark \ref{oss3}, therefore if $\Phi$ is simply laced the last term of the equation is also non-negative, yielding $||\alpha|| < ||\beta||$, a contradiction.
\hfill $\bigtriangleup$

Since $\Phi$ is doubly laced, we also get the following formula for the last term appearing in \eqref{eq:norma_rho}, which will be useful later:
\begin{equation}	\label{eq:valori}
 \frac{||\gamma_i||^2}{||\theta_s||^2} (1 - \langle \alpha, \gamma_i \rangle) = \left\{
 \begin{array}{cl}
 2 & \text{if $\gamma_i$ is long and $\langle \alpha, \gamma_i \rangle = 0$} \\
1 & \text{if $\gamma_i$ is short and $\langle \alpha, \gamma_i \rangle = 0$} \\
0 & \text{if $\langle \alpha, \gamma_i \rangle = 1$} \\
-1 & \text{if $\langle \alpha, \gamma_i \rangle = 2$}
\end{array}  \right.
\end{equation}

\begin{claim} \label{claim1,5}
\textit{Both $\alpha$ and $\beta$ are long roots.}
\end{claim}

We show that $\alpha$ is long, the case of $\beta$ being similar.

Suppose that $\alpha$ is short. Since $\alpha \not \in \Gamma$ by Remark \ref{oss3}, it follows that $\langle \alpha, \gamma_i \rangle \leq 1$ for all $i$, thus all the terms appearing in \eqref{eq:norma_rho} are non-negative. On the other hand, by the assumption on $\Gamma$, the second term is positive. Therefore $||\beta|| > ||\alpha||$, namely $\beta$ is long and $||\beta||^2 = 2||\alpha||^2$ because $\Phi$ is not of type $G_2$. Moreover, we see that the third term on the right hand side of \eqref{eq:norma_rho} must be 0, namely it must be $\langle \alpha, \gamma_i \rangle = 1$ for all $i$. Thus $\langle \gamma_i, \alpha \rangle \geq 1$ for all $i$, hence $\langle \gamma_i, \alpha \rangle = 1$ for all $i$, thanks to Remark \ref{oss2}. In particular, $\Gamma$ contains only short roots.

For every $i= 1, \ldots, 4$, it follows from \eqref{eq:lemma} that
\[
	\langle \beta, \gamma_i \rangle = \langle \alpha, \gamma_i \rangle - \sum_{j=1}^4 \langle \gamma_j, \gamma_i \rangle = -1 - \sum_{j \neq i} \langle \gamma_j, \gamma_i \rangle < 0
\]
Since $\beta$ is long and all the $\gamma_i$ are short, it follows that $\langle \beta, \gamma_i \rangle = -2$ for all $i$. Thus for every $i$ there is a (unique) $j \neq i$ such that $\langle \gamma_i, \gamma_j \rangle \neq 0$. Summing both over $i$ and $j$, it follows that  
\[ \sum_{i < j} \langle \gamma_i, \gamma_j \rangle \geq 2\]
Thus by \eqref{eq:norma_rho} it follows that $||\beta||^2 \geq 3 ||\theta_s||^2$, which is absurd because $\Phi$ is not of type $G_2$.
\hfill $\bigtriangleup$

\begin{remarkk} \label{oss4}
\textit{	If $\gamma_i$ is long, then $\langle \gamma_i, \gamma_j \rangle = 0$ for all $j \neq i$.}
\end{remarkk}

Indeed by \eqref{eq:lemma} we have $0 \leq \sum_{j \neq i} \langle \gamma_j, \gamma_i \rangle = \langle \alpha, \gamma_i \rangle - \langle \beta, \gamma_i \rangle -2$. On the other hand $\alpha, -\beta \not \in  \Gamma$ by Remark \ref{oss3}, therefore the latter is non-positive by Remark \ref{oss1}.
\hfill $\bigtriangleup$

\begin{claim}
\textit{There is at most one index $i$ such that $\gamma_i$ is long.}
\end{claim}

If there are three long roots among the $\gamma_i$, then $\Gamma$ is necessarily orthogonal by Remark \ref{oss4}, a contradiction. Suppose that there are exactly two long roots, say $\gamma_1$ and $\gamma_2$. Since $\Gamma$ is not orthogonal, by Remark \ref{oss4} the short roots $\gamma_3$ and $\gamma_4$ must be different and non-orthogonal: hence it must be $\langle \gamma_3, \gamma_4 \rangle = 1$. On the other hand $\Phi$ is doubly laced by Claim \ref{claim1}, and  both $\alpha$ and $\beta$ are long roots by Claim \ref{claim1,5}. Thus $\langle \alpha, \gamma_4 \rangle$ and $\langle \beta, \gamma_4 \rangle$ are both even integers. But this yields a contradiction with \eqref{eq:lemma}, indeed
\[
	\langle \alpha, \gamma_4 \rangle - \langle \beta, \gamma_4 \rangle = \langle \gamma_3, \gamma_4 \rangle + 2 = 3
\]
thanks to Remark \ref{oss4}. \hfill $\bigtriangleup$

\begin{remarkk} \label{oss5}
\textit{There is at most one index $i$ such that $\langle \alpha, \gamma_i \rangle = 0$. In particular, $\sum_{i=1}^4 \langle \gamma_i, \alpha \rangle \geq 3$, with equality if and only if such an index exists.}
\end{remarkk}

Suppose that $\langle \alpha, \gamma_1 \rangle = \langle \alpha, \gamma_2 \rangle = 0$. Recall that $\alpha$ and $\beta$ are both long by Claim \ref{claim1,5}. Using the assumption on $\Gamma$, the equalities \eqref{eq:norma_rho} and \eqref{eq:valori} imply that
\[0> \sum_{i=1}^4
 \frac{||\gamma_i||^2}{||\theta_s||^2} (1 - \langle \alpha, \gamma_i \rangle)
 \geq 2 +  \sum_{i=3}^4
 \frac{||\gamma_i||^2}{||\theta_s||^2} (1 - \langle \alpha, \gamma_i \rangle) \geq 0,
\]
a contradiction. Since $\alpha$ is long, the last claim follows as well.
\hfill $\bigtriangleup$

\begin{claim} \label{claim2}
\textit{Suppose that $\sum_{i=1}^4 \langle \gamma_i, \alpha \rangle = 3$. Then we can enumerate $\gamma_1, \gamma_2, \gamma_3, \gamma_4$ in such a way that
\begin{align*}
	\langle \alpha, \gamma_1 \rangle = \langle \gamma_2, \gamma_1 \rangle = \langle \gamma_3, \gamma_1 \rangle =  \langle \gamma_4, \gamma_1 \rangle = 0, \\
	- \langle \beta, \gamma_1 \rangle = \langle \alpha, \gamma_2 \rangle = \langle \alpha, \gamma_3 \rangle = \langle \alpha, \gamma_4 \rangle = 2
\end{align*}
In particular, $\gamma_1, \gamma_2, \gamma_3, \gamma_4$ are all short roots. }
\end{claim}
 
By Remark \ref{oss5}, there is a unique index $i$ such that $\langle \gamma_i, \alpha \rangle = 0$. Assume $\langle \gamma_1 , \alpha \rangle = 0$. 

By Claim \ref{claim1,5} both $\alpha$ and $\beta$ are long. Therefore, by the assumption on $\Gamma$, equation \eqref{eq:norma_rho} yields
\[
	0  > \sum_{i=1}^4
 \frac{||\gamma_i||^2}{||\theta_s||^2} (1 - \langle \alpha, \gamma_i \rangle)
 \geq 1 +  \sum_{i=2}^4
 \frac{||\gamma_i||^2}{||\theta_s||^2} (1 - \langle \alpha, \gamma_i \rangle) 
 \]
By \eqref{eq:valori}, every term in the last sum is greater than or equal to $-1$. Thus at least two of its summands are negative, say $\langle \alpha, \gamma_2 \rangle = \langle \alpha, \gamma_3 \rangle = 2$. In particular, being $\alpha \not \in \Gamma$ by Remark \ref{oss3}, it follows that $\gamma_2$ and $\gamma_3$ are short roots.

%

On the other hand $\langle \alpha, \gamma_1 \rangle = 0$, thus by \eqref{eq:lemma} we get
\[
	\langle \beta, \gamma_1 \rangle = - \sum_{i\geq 1} \langle \gamma_i, \gamma_1 \rangle = -2
- \sum_{i> 1} \langle \gamma_i, \gamma_1 \rangle \leq -2\]
Therefore $\langle \beta, \gamma_1 \rangle = -2$ and $ \langle \gamma_i, \gamma_1 \rangle = 0$ for all $i > 1$. In particular, being $-\beta\not \in \Gamma$ by Remark \ref{oss3}, it follows that $\gamma_1$ is also a short root.

It only remains to show that $\langle \alpha, \gamma_4 \rangle = 2$. We already know $\langle \alpha, \gamma_4 \rangle > 0$, suppose $\langle \alpha, \gamma_4 \rangle = 1$. Since $\alpha$ is long, it follows that $\gamma_4$ is long as well. Thus by Remark \ref{oss4} we get $\langle \gamma_i, \gamma_4 \rangle = 0$ for all $i <4$. By equation \eqref{eq:norma_rho} it follows then
\[	
\langle \gamma_2, \gamma_3\rangle =
- \sum_{i=1}^4
 \frac{||\gamma_i||^2}{||\theta_s||^2} (1 - \langle \alpha, \gamma_i \rangle),
\]
and computing the right hand side we get $\langle \gamma_2, \gamma_3\rangle = 1$. Thus by \eqref{eq:lemma} we get
\[
	\langle \beta, \gamma_3 \rangle = \langle \alpha, \gamma_3 \rangle - \langle \gamma_2, \gamma_3 \rangle - \langle \gamma_3, \gamma_3 \rangle = -1
\]
On the other hand $\beta$ is long and $\gamma_3$ is short, thus $\langle \beta, \gamma_3 \rangle$ must be even. This yields a contradiction, and we conclude that $\langle \alpha, \gamma_4 \rangle = 2$.
\hfill $\bigtriangleup$

\textit{Conclusion of the proof.}
Recall that $\alpha$ is a long root by Claim \ref{claim1,5}. In order to prove that $\beta = - \alpha$, by Remark \ref{oss2} it is therefore enough to show that $\sum_{i=1}^4 \langle \gamma_i, \alpha \rangle = 4$.

Suppose that this is not the case. Then by Remark \ref{oss5} we have $\sum_{i=1}^4 \langle \gamma_i, \alpha \rangle = 3$. Thus by Claim \ref{claim2} equality \eqref{eq:norma_rho} reduces to
\begin{equation}	\label{eq:finale}
	\langle \gamma_2, \gamma_3 \rangle + \langle \gamma_2, \gamma_4 \rangle + \langle \gamma_3, \gamma_4 \rangle = 2
\end{equation}
By Claim \ref{claim2} and equation \eqref{eq:lemma}, evaluating $\langle \beta, \gamma_i \rangle$ for $i > 1$ yields the system
\[\left\{
\begin{array}{l}
 \langle \beta, \gamma_2 \rangle + \langle \gamma_3, \gamma_2 \rangle + \langle \gamma_4, \gamma_2 \rangle = 0 \\
	\langle \beta, \gamma_3 \rangle + \langle \gamma_2, \gamma_3 \rangle + \langle \gamma_4, \gamma_3 \rangle = 0 \\
		\langle \beta, \gamma_4 \rangle + \langle \gamma_2, \gamma_4 \rangle + \langle \gamma_3, \gamma_4 \rangle = 0
\end{array}
\right.\]

Notice that $\langle \gamma_i, \gamma_j \rangle = \langle \gamma_j , \gamma_i \rangle$ for all $i,j$: indeed the $\gamma_i$ all have the same length by Claim \ref{claim2}. Moreover $\beta$ is long and all the $\gamma_i$ are short, thus $\langle \beta, \gamma_i \rangle$ is either $0$ or $-2$ for all $i$.

If $\langle \gamma_i, \gamma_j \rangle = 1$ for some $i, j$, then we get $\langle \gamma_2, \gamma_3 \rangle = \langle \gamma_2, \gamma_4 \rangle = \langle \gamma_3, \gamma_4 \rangle = 1$,
contradicting \eqref{eq:finale}. Thus the pairings $\langle \gamma_i, \gamma_j \rangle $ can only take values 0 or 2.
On the other hand, since all the $\gamma_i$ have the same length, $\langle \gamma_i, \gamma_j \rangle = 2$ if and only if $\gamma_i = \gamma_j$. It follows that $\Gamma$ is an orthogonal set, a contradiction.
\end{proof}

\begin{example}
Examples of roots as in Lemma \ref{lemma:4-ortogonali} are the followings.
\begin{itemize}
	\item[i)] Suppose that $\Phi$ is of type $B_n$, represented in the usual way in terms of an orthonormal basis $\epsilon_1, \ldots, \epsilon_n$ of the vector space $\langle \Phi \rangle_\mathbb{R}$. Take as a set of positive roots
\[
	\Phi^+ = \{\epsilon_i \; | \; i=1, \ldots, n\} \cup \{\epsilon_i \pm \epsilon_j \; | \; 1 \leq i < j \leq n\}.
\]
Then we can take $\gamma_1 = \gamma_2 = \epsilon_i$ and $\gamma_3 = \gamma_4 = \epsilon_j$, whenever $i \neq j$.
	\item[ii)] Suppose that $\Phi$ is of type $C_n$, represented in the usual way in terms of an orthonormal basis $\epsilon_1, \ldots, \epsilon_n$ of the vector space $\langle \Phi \rangle_\mathbb{R}$. Take as a set of positive roots
\[
	\Phi^+ = \{\epsilon_i \pm \epsilon_j \; | \; 1 \leq i < j \leq n\} \cup \{2\epsilon_i \; | \; i=1, \ldots, n\}.
\]
Then we can take $\gamma_1 = \epsilon_i + \epsilon_j$, $\gamma_2 = \epsilon_i - \epsilon_j$, $\gamma_3 = \epsilon_i + \epsilon_k$, $\gamma_4 = \epsilon_i - \epsilon_k$, whenever $i \neq j$ and $i \neq k$.
	\item[iii)] When $\Phi$ is of type $F_4$, it is indeed possibile to have a configuration of roots in the previous lemma with a long root inside $\Gamma$. Let $\alpha_1, \alpha_2, \alpha_3, \alpha_4$ be the simple roots of $\Phi$, enumerated as in \cite{Bou}. Then we can take
\begin{itemize}
	\item[$\bullet$] $\gamma_1 = 	\alpha_1$,
	\item[$\bullet$] $\gamma_2 = 	\alpha_1 + 2 \alpha_2 + 2 \alpha_3 + \alpha_4$,
	\item[$\bullet$] $\gamma_3 = 	\alpha_1 + 2 \alpha_2 + 3 \alpha_3 + \alpha_4$,
	\item[$\bullet$] $\gamma_4 = 	\alpha_1 + 2 \alpha_2 + 3 \alpha_3 + 2\alpha_4$,
	\item[$\bullet$] $\alpha = 	2\alpha_1 + 3 \alpha_2 + 4 \alpha_3 + 2 \alpha_4$.
\end{itemize}
\end{itemize}
\end{example}

Before proving the second implication of Theorem \ref{teo:caratterizzazione-sspazi-sferici}, we consider the case of a subspace $\goa_\Gamma \subset \gon$, where $\Gamma$ either is an orthogonal set of roots, or satisfies the assumptions of Lemma \ref{lemma:4-ortogonali}.

We first consider the case of an orthogonal set $\Gamma \subset \Phi^+$, and show that, if it occurs inside a biconvex subset or in a combinatorial ideal of $\Phi^+$ which satisfies the condition of Theorem \ref{teo:caratterizzazione-sspazi-sferici}, then $\goa_\Gamma$ is a spherical subspace of $\gon$.

If $\Gamma \subset \Phi^+$, let $\Phi_\Gamma = \langle \Gamma \rangle_\mathbb R \cap \Phi$ be the parabolic subsystem of $\Phi$ generated by $\Gamma$.

\begin{remark} \label{oss:CFG}
Subsets of orthogonal roots $\Gamma \subset \Phi$ giving rise to non-spherical nilpotent orbits were studied in \cite{GMP}, \cite{GMP2}, and classified in \cite[Proposition 3.7]{CFG}. In particular, if $\Phi$ is not of type $G_2$ and $G.x_\Gamma$ is not a spherical orbit, we only have the following possibilities:
\begin{itemize}
	\item[($D_4$)] There are four pairwise distinct roots $\gamma_1, \gamma_2, \gamma_3, \gamma_4$ in $\Gamma$ such that 
	\[
	\tfrac{1}{2}(\gamma_1 + \gamma_2 + \gamma_3 + \gamma_4) \in \Phi.
	\]
If $\Gamma_0 = \{\gamma_1, \gamma_2, \gamma_3, \gamma_4\}$, then $\Phi_{\Gamma_0}$ is a root system of type $D_4$.\\
	\item[($B\!F_4$)] There are four pairwise distinct long roots $\gamma_1, \gamma_2, \gamma_3, \gamma_4$ in $\Gamma$ such that 
	\[
	\tfrac{1}{2}(\gamma_1 + \gamma_2) \in \Phi\qquad \text{and} \qquad \tfrac{1}{2}(\gamma_3 + \gamma_4) \in \Phi.
	\]
If $\Gamma_0 = \{\gamma_1, \gamma_2, \gamma_3, \gamma_4\}$, then $\Phi_{\Gamma_0}$ is a root system of type $B_4$ or $F_4$.\\
	\item[($B_3$)] There are two distinct long roots $\gamma_1, \gamma_2$ and a short root $\gamma_3$ in $\Gamma$ such that 
	\[
	\tfrac{1}{2}(\gamma_1 + \gamma_2 + 2\gamma_3) \in \Phi.
	\]
If $\Gamma_0 = \{\gamma_1, \gamma_2, \gamma_3\}$, then $\Phi_{\Gamma_0}$ is a root system of type $B_3$.
\end{itemize}
\end{remark}

\begin{lemma}	\label{lemma:ortogonali_sferici} 
Let $\Psi\subset \Phi^+$ be either a combinatorial ideal or a biconvex subset, and suppose that $\langle \alpha, \beta \rangle \geq 0$ for all $\alpha, \beta \in \Psi$. Let $\Gamma \subset \Psi$ be an orthogonal subset, then $\goa_\Gamma$ is a spherical subspace of $\gon$.
\end{lemma}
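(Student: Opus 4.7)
The plan is to argue by contradiction. Since rescaling by $T$ does not change the $G$-orbit, $\goa_\Gamma$ is a spherical subspace if and only if for every subset $\Gamma' \subseteq \Gamma$ the element $x_{\Gamma'} = \sum_{\gamma \in \Gamma'} e_\gamma$ has a spherical $G$-orbit. If this fails, then by the classification recalled in Remark \ref{oss:CFG}, $\Gamma$ must contain a sub-configuration $\Gamma_0$ of one of the three types $(D_4)$, $(BF_4)$, or $(B_3)$, and the task reduces to ruling out each of these three possibilities using the hypotheses on $\Psi$.

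In each case, there is a distinguished root $\gamma_0 \in \Phi$ given by the half-sum formula of Remark \ref{oss:CFG}, and I would exhibit a positive root $\eta = \gamma_0 - \gamma_i \in \Phi^+$ for a suitable $\gamma_i \in \Gamma_0$ (the existence of such an $i$ is easily checked inside the parabolic subsystems $\Phi_{\Gamma_0}$, which are of type $D_4$, $B_4$, $F_4$, or $B_3$). Since $\langle \gamma_0, \gamma_i \rangle = 1$ in each of these cases, one has $\langle \eta, \gamma_i \rangle = -1 < 0$. Once $\eta \in \Psi$ is established, the pair $\eta, \gamma_i \in \Psi$ violates the hypothesis $\langle \alpha, \beta \rangle \geq 0$ on $\Psi$ and we are done.

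To show $\eta \in \Psi$, I would treat the two hypotheses on $\Psi$ separately. When $\Psi$ is a combinatorial ideal, one writes $\eta = \gamma_j + \beta$ for some $j \neq i$ with $\beta \in \Phi^+$ — a decomposition that is readily verified case-by-case inside $\Phi_{\Gamma_0}$ — and concludes $\eta \in \Psi$ by the ideal property. When $\Psi$ is biconvex, one uses biclosedness of the complement $X = \Phi^+ \smallsetminus \Psi$: writing $\gamma_j = \eta + \eta'$ with $\eta' \in \Phi^+$ for some $j \neq i$, if $\eta \notin \Psi$ then $\eta' \in \Psi$ (otherwise $\gamma_j \in X$, contradicting $\gamma_j \in \Gamma_0 \subseteq \Psi$), and a further coordinate check shows $\langle \eta', \gamma \rangle < 0$ for some $\gamma \in \Gamma_0 \subseteq \Psi$, again contradicting the pairing hypothesis. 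The main obstacle is the explicit verification, inside each of the four possible parabolic subsystems $\Phi_{\Gamma_0}$, that the auxiliary decompositions $\eta = \gamma_j + \beta$ and $\gamma_j = \eta + \eta'$ can always be arranged with all constituents positive roots; this amounts to routine but unavoidable case-by-case computations in explicit root coordinates.
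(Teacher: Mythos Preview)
Your overall strategy coincides with the paper's: argue by contradiction via the classification in Remark \ref{oss:CFG}, and in each of the cases $(D_4)$, $(B\!F_4)$, $(B_3)$ produce a root with negative pairing against some $\gamma_i \in \Gamma_0$. Your phrasing (force $\eta \in \Psi$ and contradict the pairing hypothesis) is simply the contrapositive of the paper's (show $\eta \notin \Psi$ and contradict the ideal/biconvex property), so the arguments are essentially the same.

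There is, however, a genuine gap in the $(B\!F_4)$ case as you have set it up. With $\gamma_0 = \tfrac{1}{2}(\gamma_1+\gamma_2)$ taken from the half-sum formula in Remark \ref{oss:CFG}, the only indices with $\langle \gamma_0,\gamma_i\rangle=1$ are $i\in\{1,2\}$, giving (up to sign) $\eta=\tfrac{1}{2}(\gamma_2-\gamma_1)$. In the combinatorial-ideal branch you then need $\eta=\gamma_j+\beta$ with $j\neq i$ and $\beta\in\Phi^+$; but $\eta-\gamma_2=-\gamma_0\in\Phi^-$, while $\eta-\gamma_3$ and $\eta-\gamma_4$ are not roots at all, since $\eta$ is a short root orthogonal to the long roots $\gamma_3,\gamma_4$. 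So the promised ``routine'' decomposition cannot be arranged for this choice of $\gamma_0$. The paper avoids this by observing that in types $B_4$ and $F_4$ the long roots of $\Phi_{\Gamma_0}$ form a subsystem of type $D_4$, in which $\tfrac{1}{2}(\gamma_1+\gamma_2+\gamma_3+\gamma_4)$ is again a root, and then simply reuses the $(D_4)$ argument. You should make this reduction explicit rather than invoking the half-sum displayed in Remark \ref{oss:CFG}; once you do, the remaining case-checks in $(D_4)$ and $(B_3)$ go through along the lines you describe and match the paper's computations.
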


\begin{proof}
Arguing by contradiction, we show that if $\Gamma$ contains a subset $\Gamma_0$ as in cases ($D_4$), ($B\!F_4$), ($B_3$) of Remark \ref{oss:CFG}, then $\Psi$ cannot satisfy the assumptions of the lemma.

\begin{itemize}
\item[($D_4$)]
Notice that $\{\gamma_1, -\tfrac{1}{2}(\gamma_1 + \gamma_2 + \gamma_3 + \gamma_4 ), \gamma_2, \gamma_3\}$ is a base for $\Phi_{\Gamma_0}$. Since it is a root system of type $D_4$, it follows that
\[
\Phi_{\Gamma_0} = \{\pm \gamma_1,  \pm \gamma_2 , \pm \gamma_3, \pm \gamma_4\}	\cup
\{\tfrac{1}{2}(\pm \gamma_1 \pm \gamma_2  \pm \gamma_3 \pm \gamma_4)\}	
\]
Since $\gamma_1, \gamma_2, \gamma_3, \gamma_4$ are all positive, notice that $\Phi_{\Gamma_0}\cap \Phi^+$ must contain at least three elements of the shape
\[\tfrac{1}{2}(\pm \gamma_1 \pm \gamma_2  \pm \gamma_3 \pm \gamma_4)\]
having two coefficients equal to 1 and two coefficients equal to $-1$.
Therefore, up to reordering the $\gamma_i$'s, we can assume that
\[\beta := \tfrac{1}{2}(\gamma_1 + \gamma_2 - \gamma_3 - \gamma_4)
\qquad \qquad
\beta' := \tfrac{1}{2}(\gamma_1 - \gamma_2 - \gamma_3 + \gamma_4)\]
are both positive roots. Notice that $\beta, \beta' \in \Phi^+ \setminus \Psi$ by the assumption on $\Psi$: indeed $\langle \beta, \gamma_3 \rangle = \langle \beta', \gamma_3 \rangle = -1$. Similarly $\beta + \gamma_3 \in \Phi^+ \setminus \Psi$, because $\langle \beta + \gamma_3, \gamma_4 \rangle = -1$. Therefore $\Psi$ cannot be a combinatorial ideal. On the other hand $\gamma_1 = (\beta + \gamma_3) + \beta'$, therefore $\Psi$ cannot be a biconvex subset either.\\

\item[($B\!F_4$)]
Notice that the long roots of a root system of type $B_4$ or $F_4$ form a root system of type $D_4$. In particular, the set of the long roots $\Phi^\ell_{\Gamma_0}$ in $\Phi_{\Gamma_0}$ is a root system of type $D_4$.

Recall that two roots $\alpha, \beta$ are called \textit{strongly orthogonal} if $\alpha\pm \beta \not \in \Phi$. The following properties are easily shown:
\begin{itemize}
	\item[i)] In types $B_4$ and $F_4$, two orthogonal short roots are never strongly orthogonal. 
	\item[ii)] In type $F_4$, the half sum of two orthogonal long roots is always a root.
\end{itemize} 

As in case ($D_4$), it follows that
\[
\Phi^\ell_{\Gamma_0} = 
\{\pm \gamma_1,  \pm \gamma_2 , \pm \gamma_3, \pm \gamma_4\}	\cup
\{\tfrac{1}{2}(\pm \gamma_1 \pm \gamma_2  \pm \gamma_3 \pm \gamma_4)\}	
\]
Thus we can conclude as in the previous case.\\

\item[($B_3$)]
Notice that $\{\gamma_1, -\tfrac{1}{2}(\gamma_1 + \gamma_2 + 2\gamma_3 ), \gamma_3\}$ is a base for $\Phi_{\Gamma_0}$. Since it is a root system of type $B_3$, it follows that
\[
\Phi_{\Gamma_0} = \{\pm \gamma_1,  \pm \gamma_2 , \pm \gamma_3\}  \cup \{\tfrac{1}{2}(\pm \gamma_1 \pm \gamma_2)\} \cup \{\tfrac{1}{2}(\pm \gamma_1 \pm \gamma_2  \pm 2\gamma_3 )\}	 
\]
Up to switching $\gamma_1$ and $\gamma_2$, we can assume that
\[ \beta = \tfrac{1}{2}(\gamma_1 - \gamma_2)\]
is a positive root. Notice that $\beta \in \Phi^+ \setminus \Psi$, and that $\beta + \gamma_3 \in \Phi^+ \setminus \Psi$ as well: indeed $\langle \beta, \gamma_2 \rangle = \langle \beta + \gamma_3, \gamma_2 \rangle = -1$. Thus by the assumption $\Psi$ cannot be a combinatorial ideal. On the other hand, if $\Psi$ is a biconvex subset, consider the roots
\[
	\beta' := \tfrac{1}{2}(\gamma_1 + \gamma_2 - 2 \gamma_3) \qquad \qquad
	\beta'' = \tfrac{1}{2}(-\gamma_1 + \gamma_2 + 2 \gamma_3).
\]
Notice that either $\beta' \in \Phi^+$ or $\beta'' \in \Phi^+$, and that none of them can be in $\Psi$ since they assume negative values respectively against $\gamma_3$ and $\gamma_1$. On the other hand $\gamma_1 = (\beta + \gamma_3) + \beta'$ and $\gamma_3 = \beta + \beta''$, thus the biconvexity of $\Psi$ is contradicted in both cases.
\qedhere
\end{itemize}
\end{proof}

We now consider the non-orthogonal cases arising from Lemma \ref{lemma:4-ortogonali}. Let $\Gamma = \{\gamma_1, \gamma_2, \gamma_3, \gamma_4 \}$ be a set of positive roots satisfying the hypotheses of Lemma \ref{lemma:4-ortogonali}, and suppose that $\Gamma$ is contained in a combinatorial ideal or in a biconvex subset satisfying the condition of Theorem \ref{teo:caratterizzazione-sspazi-sferici}. In order to prove that $\goa_\Gamma$ is spherical, we first show that $\Gamma$ does not contain any long root.

\begin{lemma} \label{lemma:4-nonortF4}
Suppose that $\Phi$ is doubly laced, and let $\Psi \subset \Phi^+$ be either a combinatorial ideal or a biconvex subset such that such that $\langle \alpha, \beta \rangle \geq 0$ for all $\alpha, \beta \in \Psi$. Let $\gamma_1, \gamma_2, \gamma_3, \gamma_4 \in \Psi$ (not necessarily distinct) be such that $\tfrac{1}{2}(\gamma_1 + \gamma_2 + \gamma_3 + \gamma_4) \in \Phi$, and suppose that $\Gamma = \{\gamma_1, \gamma_2, \gamma_3, \gamma_4\}$ is not orthogonal. Then $\Gamma$ does not contain any long root.
\end{lemma}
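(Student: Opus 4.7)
My plan is to argue by contradiction, combining Lemma \ref{lemma:4-ortogonali} as a setup with a type-specific analysis that ultimately reduces to $F_4$. Set $\alpha := \tfrac{1}{2}(\gamma_1+\gamma_2+\gamma_3+\gamma_4) \in \Phi$ and $\beta := -\alpha$: since $\Gamma$ is non-orthogonal by hypothesis, Lemma \ref{lemma:4-ortogonali} applies and gives that $\alpha$ is long, that at most one $\gamma_i$ is long, and that the long one (if it exists) is orthogonal to the other three. Suppose for contradiction that $\gamma_1$ is long; then $\gamma_2, \gamma_3, \gamma_4$ are short and all orthogonal to $\gamma_1$. Pairing $\gamma_1+\gamma_2+\gamma_3+\gamma_4 = 2\alpha$ against $\alpha$, and using Remark \ref{oss1} together with the fact from Remark \ref{oss3} that $\alpha \ne \gamma_1$ (so $\langle \alpha, \gamma_1\rangle < 2$), I would deduce $\langle \alpha, \gamma_1\rangle = 1$ and $\langle \alpha, \gamma_i\rangle = 2$ for $i = 2, 3, 4$. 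Evaluating $||2\alpha - \gamma_1||^2 = ||\gamma_2+\gamma_3+\gamma_4||^2$ then forces $\sum_{2 \leq i < j \leq 4} \langle \gamma_i, \gamma_j\rangle = 3$; together with the nonnegativity hypothesis on $\Psi$ and the short--short bound $\langle \gamma_i, \gamma_j\rangle \in \{0, 1, 2\}$, the only possibility is $\langle \gamma_i, \gamma_j\rangle = 1$ for every pair, so $\gamma_2, \gamma_3, \gamma_4$ are three distinct short roots pairwise at $60^\circ$.

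This last configuration is incompatible with $B_n$ (where two distinct short roots are mutually orthogonal) and with $C_n$ (where two distinct long roots always satisfy $\langle \cdot, \cdot\rangle \in \{0, \pm 2\}$, contradicting $\langle \alpha, \gamma_1\rangle = 1$), so $\Phi$ must be $F_4$. In $F_4$ I would introduce $\beta' := \alpha - \gamma_1$ and $\beta'' := \alpha - \gamma_2 - \gamma_3$; a root-string argument (for instance $\beta'' = s_{\gamma_3}(\alpha - \gamma_2)$, using $\langle \alpha - \gamma_2, \gamma_3\rangle = 1$) shows both are roots, with $\beta'$ long, $\beta''$ short, and $\beta' + \beta'' = \gamma_4$. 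A direct computation gives
\[
\langle \beta', \gamma_1\rangle = \langle \alpha, \gamma_1\rangle - \langle \gamma_1, \gamma_1\rangle = -1, \qquad \langle \beta'', \gamma_2\rangle = \langle \alpha, \gamma_2\rangle - \langle \gamma_2, \gamma_2\rangle - \langle \gamma_3, \gamma_2\rangle = -1,
\]
so whenever it is positive, neither root can lie in $\Psi$.

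The contradiction comes from splitting on the sign of $\beta'$. If $\beta' \in \Phi^+$, then also $\beta'' \in \Phi^+$, and the decomposition $\gamma_4 = \beta' + \beta''$ with both summands in $\Phi^+ \setminus \Psi$ instantly breaks biconvexity of $\Psi$; for a combinatorial ideal $\Psi$ I would instead chase the closure of $\Gamma$ under addition of positive roots inside the specific $F_4$ configuration until two long positive roots in $\Psi$ with pairing $-1$ emerge, again contradicting the nonnegativity hypothesis. If $\beta' \notin \Phi^+$, so $\gamma_1 - \alpha \in \Phi^+$, then for a combinatorial ideal the closure applied to $\gamma_4 + (\gamma_1 - \alpha) = \beta''$ forces $\beta'' \in \Psi$ in one step, contradicting $\langle \beta'', \gamma_2\rangle = -1$; for biconvex $\Psi$ the identity $\gamma_1 = \alpha + (\gamma_1 - \alpha)$ combined with $\langle \gamma_1 - \alpha, \gamma_2\rangle = -2$ first forces $\alpha \in \Psi$, after which a short chain of forced decompositions analogously produces two $\Psi$-elements with negative pairing. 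The hard step is the combinatorial-ideal case when $\beta' \in \Phi^+$, where chasing the closure inside $F_4$ requires some explicit computation; the remaining three sub-cases are much more direct.
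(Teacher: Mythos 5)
Your first half runs parallel to the paper's proof and is fine: invoking Lemma \ref{lemma:4-ortogonali} to get $\alpha$ long with $\langle\gamma_i,\alpha\rangle=1$ for all $i$, and deducing that if $\gamma_1$ is long then $\gamma_2,\gamma_3,\gamma_4$ are three distinct short roots with $\langle\gamma_i,\gamma_j\rangle=1$ pairwise (you should also rule out the pattern $(2,1,0)$ for the three pairings, but that is immediate since $\langle\gamma_i,\gamma_j\rangle=2$ would force $\gamma_i=\gamma_j$). Your elimination of $B_n$ and $C_n$ is a legitimate, slightly different route to $F_4$ than the paper's explicit exhibition of an $F_4$ Cartan matrix from $\{-\gamma_1,\alpha-2\gamma_2,\gamma_2,-\gamma_3\}$.

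The endgame, however, has a genuine gap. Your key positivity claim ``if $\beta'\in\Phi^+$ then also $\beta''\in\Phi^+$'' is false. Take the standard realization of $F_4$ with $\gamma_1=\epsilon_2-\epsilon_3$, $\{\gamma_2,\gamma_3,\gamma_4\}=\{\tfrac{1}{2}(\epsilon_1+\epsilon_2+\epsilon_3-\epsilon_4),\ \tfrac{1}{2}(\epsilon_1+\epsilon_2+\epsilon_3+\epsilon_4),\ \epsilon_1\}$, so that $\alpha=\epsilon_1+\epsilon_2$ (this is exactly the paper's Example (iii) in coordinates, with all $\gamma_i\in\Phi^+$). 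Then $\beta'=\alpha-\gamma_1=\epsilon_1+\epsilon_3\in\Phi^+$, but $\gamma-\beta'$ is a \emph{negative} root for every choice of short $\gamma\in\Gamma$ (e.g.\ $\epsilon_1-(\epsilon_1+\epsilon_3)=-\epsilon_3$), so $\beta''\notin\Phi^+$ no matter how you label $\gamma_2,\gamma_3,\gamma_4$. Consequently the decomposition $\gamma_4=\beta'+\beta''$ does not witness any failure of biconvexity in this case, and your ``Case A, biconvex'' argument collapses on precisely the configuration that must be handled. Moreover you explicitly defer the ``Case A, combinatorial ideal'' argument as requiring an uncompleted explicit computation, so that sub-case is not established either. (Your Case B arguments are essentially sound, since $\beta'\notin\Phi^+$ does force $\beta''\in\Phi^+$ from $\beta'+\beta''=\gamma_4$.) For comparison, the paper avoids the sign ambiguity entirely: after ordering the short roots so that $\gamma_i-\gamma_j\in\Phi^+$ for $2\leq i<j\leq 4$, it shows $\gamma_2\pm(\gamma_3-\gamma_4)\in\Phi^+$, excludes both from $\Psi$ because each completes $\gamma_1$ and a short root to an orthogonal triple of type $(B_3)$ handled by Lemma \ref{lemma:ortogonali_sferici}, and then contradicts biconvexity via $2\gamma_2=(\gamma_2-\gamma_3+\gamma_4)+(\gamma_2+\gamma_3-\gamma_4)$ and the ideal property via $\gamma_2+(\gamma_3-\gamma_4)$. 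To repair your proof you would either need to import that orthogonal-triple argument or actually carry out the closure-chasing you postponed.
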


\begin{proof}
Denote $\alpha = \tfrac{1}{2}(\gamma_1 + \gamma_2 + \gamma_3 + \gamma_4)$. By Lemma \ref{lemma:4-ortogonali}, we know that $\alpha$ is a long root not in $\Gamma$.  Therefore the equality
\[
	4 = \langle 2\alpha, \alpha \rangle = \sum_{i=1}^4 \langle \gamma_i, \alpha \rangle
\]
implies that $\langle \gamma_i, \alpha \rangle = 1$ for all $i$.

Suppose that $\gamma_1$ is a long root. Then Lemma \ref{lemma:4-ortogonali} shows that $\gamma_2, \gamma_3, \gamma_4$ are all short roots, and that $\langle \gamma_1, \gamma_i \rangle = 0$ for all $i > 1$. Since $\langle \gamma_i, \alpha \rangle = 1$, for every $i=2,3,4$ it follows that $\langle \alpha, \gamma_i \rangle = 2$.

Notice that $\langle \gamma_i, \gamma_j \rangle = 1$ whenever $i,j > 1$ and $i \neq j$. Indeed
\[
	4 = \langle 2 \alpha, \gamma_i \rangle = 2 + \sum_{j \in \{2,3,4\} \setminus \{i\}}
\langle \gamma_j, \gamma_i \rangle \]
for every $i=2,3,4$. In particular, up to reordering we may assume that $\gamma_i - \gamma_j \in \Phi^+$ whenever $2 \leq i < j \leq 4$.

Consider now the roots
\[
\beta_1 = -\gamma_1, \qquad \beta_2 = \alpha - 2 \gamma_2, \qquad \beta_3 = \gamma_2, \qquad \beta_4 = -\gamma_3
\]
It follows from the preceding remarks that $\big(\langle \beta_i, \beta_j \rangle\big)_{i,j}$ is a Cartan matrix of type $F_4$. Since two short (resp. long) roots in a root system of type $B$ (resp. $C$) are always orthogonal, it follows that $\Phi$ is itself of type $F_4$, and that $\{\beta_1, \beta_2, \beta_3, \beta_4\}$ is a base for $\Phi$. In particular, it follows that 
\begin{gather*}
\gamma_2 - \gamma_3 + \gamma_4 = \beta_1 + 2 \beta_2 + 4 \beta_3 + 2 \beta_4\\
\gamma_2 + \gamma_3 - \gamma_4 = - \beta_1 - 2 \beta_2 - 2 \beta_3 - 2 \beta_4
\end{gather*}
are both in $\Phi$, hence in $\Phi^+$ because $\gamma_2 - \gamma_3$ and $\gamma_3 - \gamma_4$ are both positive.

Denote $\Gamma_1 = \{\gamma_1, \gamma_2 - \gamma_3 + \gamma_4, \gamma_3\}$ and
$\Gamma_2 = \{\gamma_1, \gamma_2 + \gamma_3 - \gamma_4, \gamma_4\}$. Then $\Gamma_1$ and $\Gamma_2$ are two orthogonal sets of roots as in Case ($B_3$) of Remark \ref{oss:CFG}, thus by Lemma \ref{lemma:ortogonali_sferici} they cannot occur as subsets of $\Psi$. It follows that $\gamma_2 - \gamma_3 + \gamma_4 \in \Phi^+ \setminus \Psi$ and $\gamma_2 + \gamma_3 - \gamma_4 \in \Phi^+ \setminus \Psi$.

It follows that $\Psi$ cannot be a biconvex subset: indeed
\[
	2\gamma_2 = (\gamma_2 - \gamma_3 + \gamma_4) + (\gamma_2 + \gamma_3 - \gamma_4),
\]
therefore $\gamma_2 \not \in \Psi$, a contradiction.

Similarly, it follows that $\Psi$ cannot be a combinatorial ideal either: indeed
\[
	\gamma_2 + \gamma_3 - \gamma_4 = \gamma_2 + (\gamma_3 - \gamma_4),
\]
therefore $\gamma_2 + \gamma_3 - \gamma_4 \in \Psi$, a contradiction.
\end{proof}

Let now $\Psi \subset \Phi^+$ be a combinatorial ideal or a biconvex subset such that $\langle \gamma, \gamma' \rangle \geq 0$ for all $\gamma, \gamma' \in \Psi$. Suppose that $\gamma_1, \gamma_2, \gamma_3, \gamma_4 \in \Psi$ satisfy an equality of the shape
\[
	\gamma_1  + \gamma_2 + \gamma_3 + \gamma_4 = \alpha - \beta
\]
for some $\alpha, \beta \in \Phi \cup \{0\}$, and
denote $\Gamma = \{\gamma_1, \gamma_2, \gamma_3, \gamma_4\}$. Then by Lemma \ref{lemma:4-ortogonali} and Lemma \ref{lemma:4-nonortF4} either $\Gamma$ is orthogonal, or $\Phi$ is doubly laced and $\Gamma$ does not contain any long root. If $\Gamma$ is orthogonal, then $\goa_\Gamma$ is a spherical subspace of $\gon$ by Lemma \ref{lemma:ortogonali_sferici}. The following lemma shows that $\goa_\Gamma$ is spherical in the second situation as well.

\begin{lemma}	\label{lemma:sferici-non-ort}
Suppose that $\Phi$ is doubly laced. Let $\gamma_1, \gamma_2, \gamma_3, \gamma_4 \in \Phi^+$ be (not necessarily distinct) short roots such that $\langle \gamma_i, \gamma_j \rangle \geq 0$ for all $i,j$. Assume that $\Gamma = \{\gamma_1, \gamma_2, \gamma_3, \gamma_4\}$ is not orthogonal, and that $\tfrac{1}{2}(\gamma_1 + \gamma_2 + \gamma_3 + \gamma_4) \in \Phi$. Then, up to reordering, $\gamma_1 + \gamma_2 = \gamma_3 + \gamma_4$. Moreover, $\goa_{\Gamma}$ is a spherical subspace of $\gon$.
\end{lemma}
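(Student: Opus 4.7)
The argument splits into a combinatorial part establishing the additive identity $\gamma_1+\gamma_2=\gamma_3+\gamma_4$ and a Lie-theoretic part establishing sphericality. For the combinatorial part I set $\alpha := \tfrac{1}{2}(\gamma_1+\gamma_2+\gamma_3+\gamma_4)$, which by hypothesis lies in $\Phi$; Lemma~\ref{lemma:4-ortogonali} applied with $\beta=-\alpha$ identifies $\alpha$ as a long root not belonging to $\Gamma$. Since $\alpha$ is long, each $\gamma_i$ is short, and $\Phi$ is doubly laced, one has $\langle\gamma_i,\alpha\rangle\in\{0,1\}$, and combined with $\sum_i\langle\gamma_i,\alpha\rangle=4$ (read off from $\sum_i\gamma_i=2\alpha$) this forces $\langle\gamma_i,\alpha\rangle=1$ and hence $\langle\alpha,\gamma_i\rangle=2$ for every $i$. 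Similarly $\sum_{j\neq i}\langle\gamma_j,\gamma_i\rangle=2$; using that $\langle\gamma_j,\gamma_i\rangle=2$ whenever $\gamma_j=\gamma_i$, a short case check shows that any repetition among the $\gamma_i$'s would force $\Gamma$ to be orthogonal, contradicting the hypothesis. Hence the $\gamma_i$'s are pairwise distinct and each $\langle\gamma_j,\gamma_i\rangle\in\{0,1\}$ for $j\neq i$, so each $\gamma_i$ is orthogonal to exactly one of the remaining three; after relabelling, $\gamma_1\perp\gamma_2$ and $\gamma_3\perp\gamma_4$. Expanding $\|\alpha-\gamma_1-\gamma_2\|^2$ with $(\alpha,\gamma_i)=\|\gamma_i\|^2$ and $(\gamma_1,\gamma_2)=0$ gives zero, so $\alpha=\gamma_1+\gamma_2$ and symmetrically $\alpha=\gamma_3+\gamma_4$.

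For the sphericality the key tool is the $\mathbb Z$-grading of $\gog$ by eigenvalues of $\ad h_\alpha$: since $\alpha$ is long and $\Phi$ doubly laced, $\gog=\bigoplus_{k=-2}^{2}\gog(k)$ with $\gog(\pm 2)=\gog_{\pm\alpha}$ one-dimensional. The equalities $\langle\gamma_i,\alpha\rangle=1$ place $\goa_\Gamma$ inside $\gog(1)$. Since $(\ad x)^4$ shifts the grading by $4$, it is possibly non-zero only as a map $\gog(-2)\to\gog(2)$; hence $(\ad x)^4$ vanishes on all of $\gog$ as soon as $(\ad x)^4 e_{-\alpha}=0$. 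The latter only involves iterated brackets of the $e_{\gamma_i}$ with $e_{-\alpha}$, so it can be computed inside the reductive Lie subalgebra $\gog_V\subset\gog$ spanned by the root spaces and coroots for roots in $\Phi_V:=\Phi\cap V$, where $V=\Span(\Gamma)$. Choosing the basis $\epsilon_1=\alpha/2$, $\epsilon_2=\tfrac12(\gamma_1-\gamma_2)$, $\epsilon_3=\tfrac12(\gamma_3-\gamma_4)$ of $V$ identifies $\Phi_V$ with the root system $C_3$ (so that $\gamma_1=\epsilon_1+\epsilon_2$, $\gamma_2=\epsilon_1-\epsilon_2$, $\gamma_3=\epsilon_1+\epsilon_3$, $\gamma_4=\epsilon_1-\epsilon_3$ and $\alpha=2\epsilon_1$), whence the semisimple part of $\gog_V$ is isomorphic to $\gos\gop_6$.

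It remains to check, in the standard six-dimensional symplectic representation of $\gos\gop_6$, that $x^2=0$. Using a symplectic basis $v_1,v_2,v_3,f_1,f_2,f_3$ and writing each $e_{\gamma_i}$ as the corresponding elementary matrix, one verifies that $x\cdot xw=0$ on every basis vector $w$; the only non-trivial case is $w=f_1$, where the summands $c_1c_2\,v_1+c_3c_4\,v_1$ coming from compositions involving the ``$A$-block'' roots $e_{\epsilon_1-\epsilon_j}$ cancel exactly against the opposite-sign contributions coming from the ``$B$-block'' roots $e_{\epsilon_1+\epsilon_j}$. From $x^2=0$ on the faithful representation $\mathbb C^6$, the identity $(\ad x)^n y=\sum_k(-1)^k\binom{n}{k}x^{n-k}\,y\,x^k$ yields $(\ad x)^3=0$ on $\gos\gop_6$, so a fortiori $(\ad x)^4 e_{-\alpha}=0$. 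Combining with the grading argument gives $(\ad x)^4=0$ on $\gog$, and Panyushev's criterion then gives $\height(x)\le 3$, so $G\cdot x$ is spherical. The main technical obstacle is precisely this cancellation in the six-dimensional matrix computation, which encodes the symmetric role of the two pairs $\{\gamma_1,\gamma_2\}$ and $\{\gamma_3,\gamma_4\}$ meeting on the common long root $\alpha$.
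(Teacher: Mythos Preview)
Your proof is correct. The combinatorial part closely parallels the paper's argument, with the minor difference that you first establish distinctness of the $\gamma_i$ and then compute $\|\alpha-\gamma_1-\gamma_2\|^2=0$ directly, whereas the paper shows $\alpha-\gamma_1-\gamma_2\in\Phi\cup\{0\}$ via root strings and then argues it is orthogonal to $\alpha,\gamma_1,\gamma_2$.

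The sphericality argument, however, follows a genuinely different route. The paper proceeds geometrically: it observes that $\gamma_2-\gamma_1$ and $\gamma_4-\gamma_3$ are long roots orthogonal to the ``other'' pair, and uses the one-parameter root subgroups $U_{\gamma_2-\gamma_1}$, $U_{\gamma_4-\gamma_3}$, and then $U_{\gamma_i-\gamma_j}$, to move any $x\in\goa_\Gamma$ into the $G$-orbit of a single root vector, yielding the sharper conclusion $\height(x)=2$. Your approach instead exploits the $\mathbb Z$-grading by $h_\alpha$ to reduce $(\ad x)^4=0$ to the single condition $(\ad x)^4 e_{-\alpha}=0$, passes to the rank-$3$ subalgebra of type $C_3$ generated by the roots $\pm\gamma_i,\pm\alpha$, and verifies $x^2=0$ in the standard $6$-dimensional representation of $\mathfrak{sp}_6$, whence $(\ad x)^3=0$ there. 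The paper's method is coordinate-free and gives a stronger height bound; yours is more computational but entirely self-contained, and the matrix identity $x^2=0$ makes the cancellation between the two orthogonal pairs completely transparent. One small remark: your assertion that $\Phi_V\cong C_3$ is true but not strictly required for the argument---it suffices that the closed subsystem generated by $\{\pm\gamma_i,\pm\alpha\}$ is of type $C_3$, since this already contains all the relevant root vectors and is bracket-closed.
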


\begin{proof}
Denote $\alpha = \tfrac{1}{2}(\gamma_1 + \gamma_2 + \gamma_3 + \gamma_4)$. Notice that $\sum_{i=1}^4 \langle \gamma_i, \alpha \rangle = \langle 2\alpha, \alpha \rangle = 4$. By Lemma \ref{lemma:4-ortogonali} it follows that $\alpha$ is a long root, hence $\langle \gamma_i, \alpha \rangle = 1$ and $\langle \alpha, \gamma_i\rangle = 2$ for all $i=1,2,3,4$. Notice that
\[4 = \langle 2 \alpha, \gamma_i\rangle = 2 + \sum_{j=2}^4 \langle \gamma_j, \gamma_i \rangle.\]
Since $\langle \gamma_j, \gamma_i \rangle \in \{0,1\}$ for all $i\neq j$, we see that for every $i$ there is precisely one index $j$ such that $\langle \gamma_j, \gamma_i \rangle = 0$.

Up to reordering we may assume that $\langle \gamma_1, \gamma_2 \rangle = \langle \gamma_3, \gamma_4 \rangle = 0$. Notice that $\alpha - \gamma_1 \in \Phi$, and that $\langle \alpha - \gamma_1, \gamma_2 \rangle = 2$: thus $\alpha - \gamma_1 - \gamma_2 \in \Phi \cup \{0\}$. On the other hand $\alpha - \gamma_1 - \gamma_2$ is orthogonal with $\alpha$, with $\gamma_1$, and with $\gamma_2$. Thereofore it must be $\alpha - \gamma_1 -\gamma_2 = 0$, namely $\alpha =\gamma_1 +\gamma_2  = \gamma_3 + \gamma_4$.

We now show that $\height(x) = 2$ for all $x \in \goa_\Gamma$, which implies the claim by Panyushev's criterion. Since $\Phi$ is not of type $G_2$, this is true if $x \in \gog_\alpha$ for some $\alpha \in \Gamma$.

Since $\langle \gamma_1, \gamma_2 \rangle = \langle \gamma_3, \gamma_4 \rangle = 0$, it follows that $\gamma_2 - \gamma_1 = s_{\gamma_1}(\alpha)$ and $\gamma_4 - \gamma_3 = s_{\gamma_3}(\alpha)$ are both in $\Phi$. Thus up to reordering we may assume that $\gamma_2 - \gamma_1$ and $\gamma_4- \gamma_3$ are both in $\Phi^+$. Denote $\Gamma_1 = \{\gamma_1, \gamma_2\}$ and $\Gamma_2 = \{\gamma_3, \gamma_4\}$.

Notice that $\height(x) = 2$ for all $x \in \goa_{\Gamma_1}$. Let indeed $x_{\gamma_1} \in \gog_{\gamma_1}$ and $x_{\gamma_2} \in \gog_{\gamma_2}$ be non-zero, and consider the action of the one parameter subgroup $u_{\gamma_2 - \gamma_1}(\xi)$ on $x_{\gamma_1} + x_{\gamma_2}$. Being $2\gamma_2 - \gamma_1 \not \in \Phi$, we can find a value of the parameter $\xi_0$ such that
\[
	u_{\gamma_2 - \gamma_1}(\xi_0). (x_{\gamma_1} + x_{\gamma_2}) = x_{\gamma_1},
\]
hence $\height(x_{\gamma_1} + x_{\gamma_2}) = \height(x_{\gamma_1}) =2$. Similarly, we have $\height(x) = 2$ for all $x \in \goa_{\Gamma_2}$.

Let now $x \in \goa_\Gamma$, and suppose that $x \not \in \goa_{\Gamma_1}$ and $x \not \in \goa_{\Gamma_2}$. Since $\Gamma$ is not orthogonal, it must be $\Gamma_1 \neq \Gamma_2$. If $i \in \{1,2,3,4\}$, notice that $\gamma_i + \gamma_2 - \gamma_1 \in \Phi$ if and only if $i = 1$: indeed $\gamma_2 - \gamma_1$ is a long root, orthogonal both with $\gamma_3$ and $\gamma_4$. Similarly $\gamma_i + \gamma_4 - \gamma_3 \in \Phi$ if and only if $i = 3$. If $x \in \goa_\Gamma$, it follows that acting with $U_{\gamma_2-\gamma_1}$ and $U_{\gamma_4-\gamma_3}$ we may assume that $\supp(x) = \{\gamma_i, \gamma_j\}$ with $\gamma_i \in \Gamma_1$ and $\gamma_j \in \Gamma_2$.

Write $x = x_{\gamma_i} + x_{\gamma_j}$ with $x_{\gamma_i}  \in \gog_{\gamma_i}$ and $x_{\gamma_j} \in \gog_{\gamma_j}$. Then $\langle \gamma_i, \gamma_j \rangle = 1$, and $2\gamma_i -  \gamma_j \not \in \Phi$. Therefore, acting with the one parameter subgroup $u_{\gamma_i - \gamma_j}(\xi)$, for a particular value of the parameter $\xi_0$ we get
\[
	u_{\gamma_i - \gamma_j}(\xi_0). (x_{\gamma_i} + x_{\gamma_j}) = x_{\gamma_j}.
\]
Therefore $\height(x) = \height(x_{\gamma_j}) = 2$.
\end{proof}

We can now show that the condition in Theorem \ref{teo:caratterizzazione-sspazi-sferici} is also sufficient.

\begin{proposition}
Let $\Psi \subset \Phi^+$ be either a combinatorial ideal or a biconvex subset, and suppose that $\langle \alpha, \beta \rangle \geq 0$ for all $\alpha, \beta \in \Psi$. Then $\goa_\Psi$ is a spherical subspace.
\end{proposition}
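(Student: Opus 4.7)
My plan is to prove $(\ad x)^4 = 0$ for every $x \in \goa_\Psi$; by Panyushev's criterion this will give $\goa_\Psi \subset \calN_3$, and hence that $\goa_\Psi$ is a spherical subspace. Write $x = \sum_{\gamma \in \Psi} c_\gamma e_\gamma$. Using the $T$-weight decomposition of $\gog$, it is enough to show that for every $\alpha, \beta \in \Phi \cup \{0\}$ the $\gog_\alpha$-component $P_{\alpha,\beta}$ of $(\ad x)^4 e_\beta$ vanishes. Expanding, $P_{\alpha,\beta}$ is a polynomial in the $c_\gamma$'s whose monomials are indexed by the $4$-multisets $M$ of elements of $\Psi$ satisfying $\sum_{\gamma \in M} \gamma = \alpha - \beta$.

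The key step will be the observation that the coefficient of the monomial $\prod_{\gamma} c_\gamma^{m_\gamma(M)}$ in $P_{\alpha,\beta}$ depends only on the support $\Gamma_M := \supp(M) \subset \Psi$ and on the structure constants of $\gog$: it is a sum, over orderings of $M$, of products of four structure constants, and involves no $c_\gamma$ with $\gamma \notin \Gamma_M$. In particular it coincides with the coefficient of the same monomial in $(\ad y)^4 e_\beta |_{\gog_\alpha}$ computed for any $y \in \goa_{\Gamma_M}$. Hence, provided $\goa_{\Gamma_M}$ is itself a spherical subspace, the identity $(\ad y)^4 \equiv 0$ on $\goa_{\Gamma_M}$ kills this coefficient for every admissible $M$; summing over all such $M$ then yields $P_{\alpha,\beta} \equiv 0$, and therefore $(\ad x)^4 = 0$.

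The remaining verification is that $\goa_{\Gamma_M}$ is spherical for every such $M$. By Remark \ref{oss3}, the assumption $\langle \gamma, \gamma' \rangle \geq 0$ on $\Psi$ already forces $\alpha, \beta \in \Phi$, so Lemma \ref{lemma:4-ortogonali} can be applied to the multiset $M$. It yields two cases: either $\Gamma_M$ is orthogonal, in which case Lemma \ref{lemma:ortogonali_sferici} supplies the sphericity of $\goa_{\Gamma_M}$ directly; or $\Phi$ is doubly laced with $\alpha = -\beta$ a long root, whence $\tfrac{1}{2}\sum_{\gamma \in M} \gamma = \alpha \in \Phi$, Lemma \ref{lemma:4-nonortF4} rules out long roots from $\Gamma_M$, and Lemma \ref{lemma:sferici-non-ort} concludes that $\goa_{\Gamma_M}$ is spherical.

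The main obstacle is the global-to-local reduction in the middle paragraph: checking that killing each monomial of $P_{\alpha,\beta}$ in turn really does kill $(\ad x)^4$ on all of $\goa_\Psi$. Once that monomial-coefficient comparison is set up, the case analysis supplied by the preceding lemmas does the rest, and the proposition follows.
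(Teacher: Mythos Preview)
Your proposal is correct and follows essentially the same approach as the paper: expand $(\ad x)^4$, reduce to showing sphericity of $\goa_{\Gamma}$ for the small sets $\Gamma$ supporting each contributing term, and invoke Remark~\ref{oss3} together with Lemmas~\ref{lemma:4-ortogonali}, \ref{lemma:ortogonali_sferici}, \ref{lemma:4-nonortF4}, and~\ref{lemma:sferici-non-ort} to handle those. The only organizational difference is that the paper groups terms into operators $p_\Gamma$ indexed by subsets $\Gamma \subset \calS$ and then uses a short induction (effectively M\"obius inversion over subsets of $\Gamma$) to deduce $p_\Gamma = 0$ from $(\ad x_{\Gamma'})^4 = 0$ for all $\Gamma' \subseteq \Gamma$, whereas you work monomial-by-monomial and read off the vanishing of each coefficient directly from the polynomial identity $(\ad y)^4 \equiv 0$ on $\goa_{\Gamma_M}$; your version is arguably a touch more direct, but the substance is identical.
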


\begin{proof}
Let $x \in \goa_\Psi$. Let $\mathcal S \subset \Psi$ be the support of $x$, and write $x = \sum_{\gamma \in \calS} x_\gamma$ with $x_\gamma \in \gog_\gamma$. Then
\begin{equation}	\label{adx4}
	(\ad x)^4 = \sum_{\gamma_1, \ldots, \gamma_4 \in \mathcal S} \ad (x_{\gamma_1}) \ad (x_{\gamma_2}) \ad (x_{\gamma_3}) \ad (x_{\gamma_4})
\end{equation}	
If a monomial
\[
\ad (x_{\gamma_1}) \ad (x_{\gamma_2}) \ad (x_{\gamma_3}) \ad (x_{\gamma_4})
\]
in the previous sum is non-zero, then there must exist $\alpha, \beta \in \Phi \cup\{0\}$ such that
\[\alpha = \beta + \gamma_1 + \gamma_2 +\gamma_3 + \gamma_4.\]
Notice that $\alpha, \beta$ are necessarily in $\Phi$, thanks to Remark \ref{oss3}.

If $\Gamma \subset \calS$ is a subset of cardinality $k \leq 4$, let $\mathrm{Full}_4(\Gamma)$ be the set of all surjective sequences $(\gamma_1, \gamma_2, \gamma_3, \gamma_4)$ of four elements in $\Gamma$, and set 
\[
	p_\Gamma = \sum_{(\gamma_1, \gamma_2, \gamma_3, \gamma_4)\in \mathrm{Full}_4(\Gamma)} \; \ad (x_{\gamma_1}) \ad (x_{\gamma_2}) \ad (x_{\gamma_3}) \ad (x_{\gamma_4})
\]
If $\alpha, \beta \in \Phi$, let $\mathrm{P}_4^+(\calS;\alpha, \beta)$ be the set of all $\Gamma \subset \calS$ of cardinality $k \leq 4$ such that
\[
	\gamma_1 + \gamma_2 + \gamma_3 + \gamma_4 = \alpha - \beta
\]
for some sequence $(\gamma_1, \gamma_2, \gamma_3, \gamma_4) \in \mathrm{Full}_4(\Gamma)$.

Denote $\mathrm P_4^+(\calS) = \bigcup_{\alpha, \beta \in \Phi} \mathrm P_4^+(\calS;\alpha,\beta)$. By definition, every non-zero monomial occurring in \eqref{adx4} appears as a summand of $p_\Gamma$ for a unique $\Gamma \in \mathrm P_4^+(\calS)$. Therefore
\[
	(\ad x)^4 = \sum_{\Gamma \in \mathrm{P}_4^+(\calS) } \; p_\Gamma
	\]
We show that $p_\Gamma = 0$ for all $\Gamma \in \mathrm P_4^+(\calS)$, which implies the statement thanks to Panyushev's characterization.

Let $\Gamma \in \mathrm P_4^+(\calS)$, and let $\alpha, \beta \in \Phi$ with $\Gamma \in \mathrm P_4^+(\calS;\alpha, \beta)$. Then Lemma \ref{lemma:4-ortogonali} together with Lemma \ref{lemma:4-nonortF4} show that either $\Gamma$ is an orthogonal set of roots,  or $\Phi$ is doubly laced, $\beta = -\alpha$ and $\Gamma$ does not contain any long root. Moreover,
it follows from Lemma \ref{lemma:ortogonali_sferici} and Lemma \ref{lemma:sferici-non-ort} that $\goa_\Gamma$ is a spherical subspace of $\gon$.

If $\Gamma' \subseteq \Gamma$, set $x_{\Gamma'} = \sum_{\gamma \in \Gamma'} x_\gamma$. Then 
\[
	0 = 	(\ad x_{\Gamma'})^4 = \sum_{\Gamma'' \subseteq \Gamma'} p_{\Gamma''}
\]
Arguing inductively on the cardinality of $\Gamma'$, we see that $p_{\Gamma'} = 0$ for all $\Gamma' \subseteq \Gamma$.
\end{proof}

\section{The case of type $G_2$.}	\label{sec:G2}

We now discuss the validity of Theorem \ref{teo1-intro} and Theorem \ref{teo2-intro} when $\Phi$ is of type $G_2$. In particular, keeping the notation already introduced, we will see that the following holds.

\begin{proposition}	\label{prop:G2}
Suppose that $\Phi$ is of type $G_2$.
\begin{itemize}
\item[i)] Let $w \in W$. Then $\goa_w$ is spherical if and only if $w$ is a commutative element.
\item[ii)] Let $\goa \subset \gob$ an ad-nilpotent ideal. Then $\goa$ is spherical if and only if $w_\goa \in \widehat W$ is a commutative element.
\end{itemize}
\end{proposition}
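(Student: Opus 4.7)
The plan is to exploit the small size of $G_2$---with $|W|=12$ and only $8$ ad-nilpotent ideals of $\gob$---to reduce both parts of the proposition to a finite case check, using the tools already assembled in the paper (Fan's commutativity criterion, Lemma~\ref{lemma:2nonsferico}, Panyushev's height criterion) together with one direct height-$4$ computation that is forced by the configuration specific to $G_2$.

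For the ``commutative $\Rightarrow$ spherical'' direction of (i), Fan's criterion identifies the commutative elements of $W$ as exactly $\{e,\, s_1,\, s_2,\, s_1 s_2,\, s_2 s_1,\, s_2 s_1 s_2\}$, and for each of these $\goa_w$ is an abelian subalgebra of $\gon$. For each of these six abelian $\goa_w$ I would verify directly that every $x\in\goa_w$ has height at most $3$: writing $x=\sum_\alpha c_\alpha e_\alpha$ with $\alpha\in\Phi(w)$, the operators $\ad e_\alpha$ pairwise commute, so $\ad x^4$ expands multinomially, and each summand $\prod(\ad e_\alpha)^{k_\alpha}$ with $\sum k_\alpha=4$ annihilates $\gog$ because in $G_2$ every $\alpha$-string has length at most $4$ and the mixed shifts available in the relevant configurations fail to reach any nonzero root space. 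The analogous check identifies the four abelian ad-nilpotent ideals $\varnothing$, $\{\theta\}$, $\{\theta,\,3\alpha_1+\alpha_2\}$ and $\{\theta,\,3\alpha_1+\alpha_2,\,2\alpha_1+\alpha_2\}$ as spherical (anyway immediate from Panyushev--R\"ohrle), and via Remark~\ref{oss:Psi_a-affine} this gives the ``commutative $\Rightarrow$ spherical'' direction of (ii).

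For the converse of (i), the six non-commutative elements split according to length. For the three of length $5$ or $6$---namely $s_1 s_2 s_1 s_2 s_1$, $s_2 s_1 s_2 s_1 s_2$, and $w_0$---the inversion set already contains a pair $\alpha,\beta$ with $\langle\alpha,\beta\rangle<0$ (respectively $\{\alpha_1,\,\alpha_1+\alpha_2\}$, $\{\alpha_2,\,3\alpha_1+\alpha_2\}$, and $\{\alpha_1,\,\alpha_2\}$), so Lemma~\ref{lemma:2nonsferico} immediately produces a non-spherical element of $\goa_w$. For the remaining three elements $s_1 s_2 s_1$, $s_2 s_1 s_2 s_1$, $s_1 s_2 s_1 s_2$ all pairwise Cartan integers in $\Phi(w)$ are non-negative and Lemma~\ref{lemma:2nonsferico} does not apply; however each such $\Phi(w)$ contains a pair of short roots $\alpha,\beta$ at $60^\circ$ whose sum $\alpha+\beta$ is a long root that also belongs to $\Phi(w)$, namely $\{\alpha_1,\,2\alpha_1+\alpha_2\}$ for the first two cases and $\{\alpha_1+\alpha_2,\,2\alpha_1+\alpha_2\}$ for the third. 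The same observation handles (ii), since every non-abelian ad-nilpotent ideal of $\gob$ in $G_2$ contains $\{\alpha_1+\alpha_2,\,2\alpha_1+\alpha_2\}$.

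The core computation, which I expect to be the main obstacle, is to prove $\height(e_\alpha+e_\beta)\geq 4$ for such a ``forbidden'' pair. Working in a Chevalley basis and setting $A=\ad e_\alpha$, $B=\ad e_\beta$, $C=\ad e_{\alpha+\beta}$, the $G_2$-specific vanishings $2\alpha+\beta\notin\Phi$ and $\alpha+2\beta\notin\Phi$ yield the Heisenberg-type relation $[A,B]=cC$ with $C$ commuting with both $A$ and $B$ on the relevant subspace. Applying $(A+B)^4$ to $v:=e_{-(\alpha+\beta)}$ and expanding over the $16$ length-$4$ words in $\{A,B\}$, most words vanish because the corresponding $\alpha$- or $\beta$-strings through $v$ are too short; the six surviving words all route through the Cartan element $h_{\alpha+\beta}$ at the second step, and after bookkeeping the standard $\pm 1,\pm 2,\pm 3$ structure constants coming from the length-$4$ $\alpha_1$-string through $\alpha_2$ they combine to a nonzero multiple of $e_{\alpha+\beta}$. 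This single computation simultaneously delivers non-sphericality of $\goa_w$ in the three outstanding cases of (i) and of every non-abelian ad-nilpotent ideal in (ii); Peterson's correspondence between abelian ideals and commutative elements of $\widehat W$ then completes (ii).
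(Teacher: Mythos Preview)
Your proposal is correct in outline and would work, but the paper takes a different route at the two key steps, trading your direct height computations for orbit-geometric reductions.

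For the forward direction of (i), the paper simply observes that for every commutative $w$ the set $\Phi(w)$ is $W$-conjugate to a subset of the unique maximal abelian ideal $\Psi_0 = \{2\alpha_1+\alpha_2,\,3\alpha_1+\alpha_2,\,3\alpha_1+2\alpha_2\}$, which is spherical; this replaces your termwise $\ad x^4 = 0$ check (which is doable but not quite as automatic as you suggest). For the converse, the paper organizes the non-commutative $w$ via the left weak order (either $s_1 s_2 s_1 \preceq_L w$ or $s_1 s_2 s_1 s_2 \preceq_L w$). In the second case $\Phi(w)$ contains the \emph{orthogonal} pair $\{\alpha_2,\,2\alpha_1+\alpha_2\}$, so non-sphericality follows from the known height-$4$ fact for orthogonal pairs---you missed this shortcut for $s_1 s_2 s_1 s_2$. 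In the first case, rather than computing $(A+B)^4$ directly, the paper acts with the one-parameter subgroup $u_{\alpha_1+\alpha_2}(\xi)$ to move $e_{\alpha_1} + e_{2\alpha_1+\alpha_2}$ into the orbit of $e_{\alpha_1} + c\,e_{3\alpha_1+2\alpha_2}$, again an orthogonal pair; the case $\{\alpha_1+\alpha_2,\,2\alpha_1+\alpha_2\}$ needed for (ii) is then reduced to this one by applying $s_{\alpha_2}$. This buys freedom from structure-constant bookkeeping, while your approach is more self-contained.

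One imprecision to fix if you pursue your route: in your core computation, not all six surviving words route through $\got$ at the second step. Only $ABAB$, $ABBA$, $BAAB$, $BABA$ land on a Cartan element there (on $h_\alpha$ or $h_\beta$, not $h_{\alpha+\beta}$); the words $AABB$ and $BBAA$ stay in root spaces throughout, passing through $e_{\pm(\beta-\alpha)}$. That said, a weight argument shows $e_{-(\alpha+\beta)}$ is the \emph{only} vector on which $(A+B)^4$ can be nonzero, so since we know $\height(e_\alpha+e_\beta)\geq 4$ your computation must succeed once done carefully.
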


Concerning the second statement, it was already pointed out in \cite{PR2} that in type $G_2$ an ad-nilpotent ideal $\goa \subset \gob$ is spherical if and only if it is abelian, which means that $w_\goa$ is commutative.

Enumerate the base $\Delta = \{\alpha_1, \alpha_2\}$ so that $||\alpha_1|| < ||\alpha_2||$, and denote $s_1 = s_{\alpha_1}$ and $s_2 = s_{\alpha_2}$. We also denote by $\ell : W \rightarrow \mN$ the length function defined by $\Delta$, and by $\leq$ the Bruhat order on $W$. Let moreover $\preceq_L$ denote the left weak order on $W$ (see \cite[Section 3.1]{BB}), defined by
\[
	v \preceq_L w \Longleftrightarrow \Phi(v) \subseteq \Phi(w).
\]
For $\alpha, \beta \in \Phi$, let $N_{\alpha, \beta}$ be the structure constants of $\gog$, defined by the equalities
\[[e_\alpha, e_\beta] = N_{\alpha, \beta} \; e_{\alpha+\beta}
\qquad \qquad 
\text{ if } \alpha, \beta, \alpha+ \beta \in \Phi.
\]
Since the characteristic of the field is zero, we have $N_{\alpha, \beta} \neq 0$ whenever $\alpha+ \beta \in \Phi$.

Before proving the proposition, we first consider some nilpotent orbits in $\gog$ which are not spherical.

\begin{remark}	\label{oss:G2}
\begin{itemize}
\item[i)] Suppose that $\alpha, \beta \in \Phi$ are orthogonal roots, then $G.(e_\alpha + e_\beta)$ is not spherical. Indeed in this case $\alpha$ and $\beta$ are strongly orthogonal, and $\height(e_\alpha + e_\beta) = 4$ (see e.g. \cite{GMP2}). Thus $G.(e_\alpha + e_\beta)$ is not spherical by Panyushev's characterization.

\item[ii)] Suppose that $\alpha = \alpha_1$ and $\beta = 2\alpha_1+\alpha_2$, then $G.(e_\alpha + e_\beta)$ is not spherical. Denote indeed $\gamma = \alpha_1+\alpha_2$, then
\begin{align*}
	\qquad \qquad u_\gamma (\xi). & (e_\alpha + e_\beta) = e_\alpha + e_\beta+ \xi [e_\gamma, e_\alpha + e_\beta] + \tfrac{1}{2} \xi^2 [e_\gamma, [e_\gamma, e_\alpha]] = \\
%
	& = e_{\alpha_1} + (1+N_{\gamma,\alpha}\; \xi) \; e_{2\alpha_1+\alpha_2} + \tfrac{1}{2} N_{\gamma,\beta} \; \xi \; (2+N_{\gamma, \alpha} \; \xi)\; e_{3\alpha_1+2 \alpha_2}
\end{align*}
By choosing properly the value of the parameter $\xi$, it follows that
\[
	e_{\alpha_1} + e_{3\alpha_1+2\alpha_2} \in G.(e_\alpha + e_\beta)
\]
On the other hand $\alpha_1$ and $3\alpha_1+2\alpha_2$ are orthogonal, thus the claim follows by case i).

\item[iii)] Suppose that $\alpha = \alpha_1+\alpha_2$ and $\beta = 2\alpha_1+\alpha_2$, then $G.(e_\alpha + e_\beta)$ is not spherical. Indeed $s_{\alpha_2}(\alpha_1+\alpha_2) = \alpha_1$ and $s_{\alpha_2}(2\alpha_1+\alpha_2) = 2\alpha_1+\alpha_2$. Thus $e_{\alpha_1}+e_{2\alpha_1+\alpha_2} \in G.(e_\alpha+e_\beta)$, and we conclude thanks to the case ii).
\end{itemize}
\end{remark}

We can now analyze the variuos cases beyond Proposition \ref{prop:G2}.

\begin{proof}[Proof of Proposition \ref{prop:G2}]
Notice that $\gob$ contains a unique maximal abelian ideal $\goa_0$, whose corresponding set of roots is
\[
	\Psi_0 = \{2\alpha_1 + \alpha_2, \; 3\alpha_1 + \alpha_2, \; 3\alpha_1 + 2\alpha_2\}.
\]
i) We analyze the sphericality of the various subspaces $\goa_w$ with $w \in W$. 
Notice that $w$ is fully commutative if and only if $\ell(w) \leq 5$, whereas it is commutative if and only if $w \leq s_2 s_1 s_2$. In particular, if $w$ is not commutative, then either $s_1 s_2 s_1 \preceq_L w$ or $s_1 s_2 s_1 s_2 \preceq_L w$.

\begin{itemize}
	\item Suppose that $w$ is commutative, namely $w \leq s_2 s_1 s_2$. Then $\Phi(w)$ is conjugated to a subset of $\Psi_0$, thus $\goa_w$ is a spherical subspace.

	\item Suppose that $s_1 s_2 s_1 \preceq_L w$. Then $\Phi(w)$ contains both $\alpha_1$ and $2\alpha_1+\alpha_2$. By Remark \ref{oss:G2} ii), the element $e_{\alpha_1} + e_{2\alpha_1+\alpha_2}$ does not belong to a spherical orbit. Therefore $\goa_w$ is not a spherical subspace.

	\item Suppose that $s_1 s_2 s_1 s_2 \preceq_L w$. Then $\Phi(w)$ contains both $\alpha_2$ and $2\alpha_1+\alpha_2$, which is a pair of orthogonal roots. By Remark \ref{oss:G2} i), the element $e_{\alpha_2} + e_{2\alpha_1+\alpha_2}$ does not belong to a spherical orbit, thus $\goa_w$ is not a spherical subspace.
\end{itemize}

ii) By the definition of $w_\goa$, an ad-nilpotent ideal $\goa \subset \gob$ is abelian if and only if $w_\goa$ is commutative. Therefore it is enough to show that every $\ad$-nilpotent ideal $\goa \subset \gob$ which is not contained in $\goa_0$ is not spherical. On the other hand, if $\goa \not \subset \goa_0$, then we necessarily have $\alpha_1 + \alpha_2 \in \Psi_\goa$: thus $2\alpha_1 + \alpha_2 \in \Psi_\goa$ as well, and we conclude by Remark \ref{oss:G2} iii).
\end{proof}

\end{document}